\pgfplotsset{compat=1.18} 
\newcommand{\A}{\mathcal{A}}
\newcommand{\bT}{\mathbb{T}}
\newcommand{\bN}{\mathbb{N}}
\newcommand{\bD}{\mathbb{D}}
\newcommand{\bZ}{\mathbb{Z}}
\newcommand{\bC}{\mathbb{C}}
\newcommand{\ra}{\rightarrow}
\newcommand{\Ra}{\Rightarrow}
\newcommand{\D}{\mathbb{D}}
\newtheorem{thm}{Theorem}[section]
\newtheorem*{thm*}{Theorem}
\newtheorem{lemma}[thm]{Lemma}
\newtheorem{cor}[thm]{Corollary}
\theoremstyle{definition}
\newtheorem{example}[thm]{Example}
\theoremstyle{definition}
\newtheorem{remark}[thm]{Remark}
\begin{document}

\author{Linus Bergqvist}
\address{Department of Mathematics, Stockholm University, 106 91 Stockholm, Sweden}
\email{linus@math.su.se} 

\title{A comparison theorem for Clark-Aleksandrov measures on the torus}
\keywords{Clark Aleksandrov-measures, Polydiscs, de Branges-Rovnyak spaces}
\subjclass[2010]{28A25, 28A35 (primary); 32A10 (secondary)}

\maketitle

\begin{abstract}
    Given two Clark-Aleksandrov measures $\sigma^1$ and $\sigma^2$ on $\bT^2$, we prove a theorem relating the property that $\sigma^1 \ll \sigma^2$ to containment of a concrete function in a certain de Branges-Rovnyak space. We show that our theorem is applicable for all rational inner functions on $\bD^2$, and we provide several examples of how the theorem can be applied to investigate Clark-Aleksandrov measures related to such functions. 
\end{abstract}

\section{Background}

In Theorem (III-11) of \cite{Sarason}, Sarason proved a comparison theorem for Clark-Aleksandrov measures on $\bT$ that relates mutual absolute continuity and $L^2$ integrability to containment of a concrete function in a de Branges-Rovnyak space. Among other things, this result can be used to show that a Borel measure on $\bT$ has an atom at a point $z_0 \in \bT$ if and only if a related function is contained in a specific de Branges-Rovnyak space. 

More recently in \cite{ComparisonTheoremBall}, Aleksandrov and Doubtsov generalized this result to Clark-Aleksandrov measures on the unit ball of $\bC^n$ for $n \geq 2$ by using results about the singular parts of pluriharmonic measures on the unit ball.

In this paper, we give a generalization of Sarason's Theorem (III-11) to Clark-Aleksandrov measures on the torus $\bT^2$, and we give several examples of how this theorem can be applied. Before proceeding with preliminaries and notation, we give a short description of the proofs and structure of this paper. In the proof of Theorem (III-11) in \cite{Sarason}, it is important that the measure $\sigma^1$ (using the notation of Theorem \ref{MainTheorem} below) is a singular measure. But this might be a little misleading when trying to generalize the result, since the key property needed from the measure is not that it is singular with respect to Lebesgue measure, but rather that it is carried by a null set of the annihilators of the disc algebra. However, by the F. and M. Riesz theorem the measures on $\bT$ with this property are precisely the singular measures, so for measures on $\bT$, these properties are equivalent. This is no longer the case for measures on $\bT^n$, and finding a suitable substitute for the F. and M. Riesz theorem is the main difficulty when trying to generalize this result.

Combining Lemmas \ref{FirstImplicationGeneral} and \ref{RestrictNullset} below gives a generalization of Sarason's result for measures on $\bT^n$ that have the property that they are carried by a null set for the annihilators of the polydisc algebra. However, due to the product structure of $\bT^n$, pluriharmonic measures on $\bT^n$ may have degenerate parts, i.e. parts of the form $\sigma_k \times m_{n-k}$ where $\sigma_k$ is a pluriharmonic measure on $\bT^k$, where $k < n$, and $m_{n-k}$ denotes Lebesgue measure on $\bT^{n-k}$ (or measures obtained by permuting the variables of such a measure). Since the degenerate parts become notationally cumbersome for large $n$, we restrict our analysis of such measures to the case $n=2$, although it should be pointed out that similar results likely hold for $n > 2$ as well. We prove that our comparison theorem holds for degenerate pluriharmonic measures as well (see Lemmas \ref{FirstImplicationGeneral} and \ref{DegenerateCaseForMainTheorem}), and by combining all of these results, we obtain our main result, Theorem \ref{MainTheorem}, which is a generalization of Sarason's theorem to pluriharmonic measures on $\bT^2$.

As an application, we show that Theorem \ref{MainTheorem} is always applicable for Clark-Aleksandrov measures corresponding to rational inner functions on $\bD^2$, and we give a few examples of how the theorem can be applied to compare Clark-Aleksandrov measures on $\bT^2$ corresponding to such functions. 

Finally, there are indications that Clark-Aleksandrov measures corresponding to inner functions more generally will have non-degenerate part carried by some set with the desired property (see for example Theorem \ref{6.1.2. Rudin} below), but a proof of this statement would require a finer analysis of the non-closed null sets of annihilators of the polydisc algebra. This is illustrated in Example \ref{RudinNotEnough}.

\section{Preliminaries}

We will now give a very brief overview of the concepts and notation that will be used in this paper. For more on Clark-Aleksandrov measures on the $n$-torus we refer the reader to \cite{Doub}, in which these measures were originally introduced, as well as \cite{Clark1} and \cite{Clark2}, and for more on de Branges-Rovnyak spaces on $\bD$ we refer the reader to \cite{Sarason}. For more on polydisc analysis in general, see the textbook \cite{Rudin}.

Let $C(z,w)$, $z, w \in \bD^n$, denote the (product) \emph{Cauchy kernel}
$$
C(z,w) := \prod_{j=1}^n \frac{1}{1- z_j \overline{w_j}},
$$
for the polydisc $\bD^n := \{z = (z_1, \ldots, z_n): z_1, \ldots, z_n \in \bD \}$.
For a (complex) Borel measure $\mu$ on $\bT^n := \{z = (z_1, \ldots, z_n): z_1, \ldots, z_n \in \bT \}$, we denote by $\mu_+$ the \emph{Cauchy transform} of $\mu$, which is defined as
$$
\mu_+(z) := \int_{\bT^n} C(z,w) d \mu(w) = \sum_{k \in \bN^n} \hat{\mu}(k) z^k, \quad z \in \bD^n,
$$
where $\hat{\mu}(k) := \int_{\bT^n} \overline{w}^k d \mu(w)$, in standard multi-index notation. 

From the definition of $\mu_+$ it follows that the measures $\mu$ for which $\mu_+ \equiv 0$ are precisely the measures such that $\hat{\mu}(k) = 0$ for all $k \in \mathbb{N}^n$. By continuity of Borel measures as linear functionals on the continuous functions on $\bT^n$, these measures are precisely the measures that annihilate all functions of the form $\overline{f}$, where $f$ is a continuous function on $\overline{\bD^n}$, the closure of $\bD^n$, whose restriction to $\bD^n$ is holomorphic, i.e. where $f$ is a function in the \emph{polydisc algebra} $A(\bD^n)$. We denote by $A(\bD^n)^\perp$ the class of Borel measures on $\bT^n$ that annihilate $A(\bD^n)$. By the above discussion it follows that all measures in the kernel of the Cauchy transform are of the form $\overline{\mu}$ for some $\mu \in A(\bD^n)^\perp$.

Now, let $b: \bD^n \ra \bD$ be a non-constant holomorphic function. For $\alpha \in \bT$, the quotient
$$
\frac{1- |b(z)|^2}{|\alpha - b(z)|^2} = \text{Re} \left(\frac{\alpha + b(z)}{\alpha - b(z)} \right), \quad z \in \bD^n
$$
is positive and $n$-harmonic (i.e. it is harmonic in each variable separately), and thus there exists a (unique) positive Borel measure on $\bT^n$  whose Poisson integral
$$
P[\mu](z) := \int_{\bT^n} \prod_{j=1}^n \frac{1-|z_j|^2}{|\zeta_j - z_j|^2} d\mu(\zeta_1, \ldots, z_n), \quad z \in \bD^n,
$$
equals the quotient above. We call this measure the \emph{Clark-Aleksandrov measure} corresponding to the function $b$ and the parameter $\alpha$, and we denote this measure by $\sigma_\alpha[b]$. When it is contextually clear what the underlying function is, we will simply write $\sigma_\alpha$ instead of $\sigma_\alpha[b]$.

A positive measure on $\bT^n$ is called a \emph{pluriharmonic measure} if its Poisson integral is the real part of some holomorphic function on $\bD^n$. These measures are sometimes called RP-measures, and following the notation in \cite{Rudin}, we will denote the class of pluriharmonic measures on $\bT^n$ by RP$(\bT^n)$. By Theorem $2.4.1.$ in \cite{Rudin}, a pluriharmonic measures are characterized by a condition on their Fourier coefficients. Namely, a real measure $\mu$ is a pluriharmonic measure if and only if all of its mixed Fourier coefficients vanish. That is, $\mu$ is an pluriharmonic measure if and only if
\begin{equation} \label{RPmeasureVanishingCoeff}
\int_{\bT^n} z_1^{k_1} \cdots z_n^{k_n} d\mu(z) = 0
\end{equation}
unless all $k_j \in \bZ$ satisfy $k_j \geq 0$ for $j=1, \ldots, n$, or $k_j \leq 0$ for $j=1, \ldots n$.

By their defining property, all Clark-Aleksandrov measures are pluriharmonic measures. It may seem like Clark-Aleksandrov measures constitute an obscure class of measures, but in fact, they are no more obscure than general pluriharmonic measures. Every (positive) pluriharmonic measure of mass $1$ on $\bT^n$ is the Clark-Aleksandrov measure corresponding to $\alpha=1$ and some holomorphic function $\phi: \bD^n \ra \bD$. If the pluriharmonic measure is singular with respect to Lebesgue measure, then the corresponding function $\phi$ will be \emph{inner}, where an inner function means a holomorphic function $I:\bD^n \ra \bD$ that has radial limits of modulus $1$ Lebesgue almost everywhere on $\bT^n$. The proof of this correspondence between Clark-Aleksandrov measures and pluriharmonic measures when $n \geq 2$ is more or less analogous to the proof of the corresponding statement for $n=1$ (see Lemma $2.2.$ of \cite{Clark2} for details).

Direct inspection shows that
$$
k_w(z) = k_b(z, w) := (1-b(z) \overline{b(w)}) C(z,w), \quad z,w \in \bD^n
$$
is conjugate symmetric and positive definite, and thus it is 
the reproducing kernel of some reproducing kernel Hilbert space on $\D^n$. We call this Hilbert space the \emph{de Branges-Rovnyak space} corresponding to $b$, and we denote it by $\mathcal{H}(b)$. Note that if $I$ is inner, then $\mathcal{H}(I)$ is just the \emph{Model space} corresponding to $I$, i.e. $\mathcal{H}(I) = H^2(\bD^n) \ominus I H^2(\bD^n)$.

We can now define 
$$
(U_{b, \alpha} k_w)(\cdot) := (1- \alpha \overline{b(w)}) C( \cdot, w).
$$
As in one variable, this operator has a unique extension to a unitary operator from $\mathcal{H}(b)$ onto $H^2(\sigma_\alpha)$, where $H^2(\sigma_\alpha)$ denotes the closed linear span of the holomorphic polynomials in $L^2(\sigma_\alpha)$ (this is a special case of Theorem $3.1.$ in \cite{ComparisonTheoremBall}).

Furthermore, we can define $V_{b,\alpha}: L^2(\sigma_\alpha[b]) \ra \mathcal{H}(b)$ by
\begin{equation} \label{(3.1.) AlekDoub}
(V_{b,\alpha} g)(z) := (1 - \overline{\alpha} b(z)) (g \sigma_\alpha)_+ (z), \quad g \in L^2(\sigma_\alpha), z \in \bD^n.
\end{equation}

When the value of $\alpha$ is contextually clear, we will sometimes omit it and simply write $V_b$. 

We will use the following theorem from \cite{ComparisonTheoremBall}. 

\begin{thm}[Special case of Theorem $3.2.$ in \cite{ComparisonTheoremBall}] \label{Theorem 3.2. AlekDoub}

For each $\alpha \in \bT$, formula \eqref{(3.1.) AlekDoub} defines a partial isometry from $L^2(\sigma_\alpha)$ into $\mathcal{H}(b)$. The restriction of $V_{b, \alpha}$ to $H^2(\sigma_\alpha)$ coincides with $U_{b, \alpha}^*$, and in particular 
\begin{equation} \label{(3.2.) AlekDoub}
\left(V_{b, \alpha} C( \cdot, w) \right)(z) = \frac{1}{1- \alpha \overline{b(w)}} k_b(z,w).
\end{equation}

\end{thm}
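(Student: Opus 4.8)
The plan is to reduce everything to a single explicit computation --- the Cauchy transform of the measure $C(\cdot,w)\,\sigma_\alpha$ --- and then bootstrap from the Cauchy kernels to all of $L^2(\sigma_\alpha)$ by soft arguments. For $z,w\in\bD^n$ I would expand $C(z,\zeta)=\sum_{k\in\bN^n}z^k\overline{\zeta}^{\,k}$ and $C(\zeta,w)=\sum_{l\in\bN^n}\zeta^l\overline{w}^{\,l}$ and integrate term by term against $\sigma_\alpha$ (legitimate since $\sum_k|z^k|<\infty$, $|\zeta^m|\equiv1$ on $\bT^n$, and $\sigma_\alpha$ is finite), which gives
\[
\big(C(\cdot,w)\,\sigma_\alpha\big)_+(z)=\int_{\bT^n}C(z,\zeta)C(\zeta,w)\,d\sigma_\alpha(\zeta)=\sum_{k,l\in\bN^n}z^k\overline{w}^{\,l}\,\widehat{\sigma_\alpha}(k-l).
\]
Since $\sigma_\alpha$ is pluriharmonic, $\widehat{\sigma_\alpha}(k-l)=0$ unless $k-l$ has all coordinates $\geq0$ or all $\leq0$ by \eqref{RPmeasureVanishingCoeff}; splitting the sum along these two cases (taking care of the overlap $k=l$), using $\widehat{\sigma_\alpha}(-m)=\overline{\widehat{\sigma_\alpha}(m)}$ and $\sum_{k\in\bN^n}(z\overline{w})^k=C(z,w)$, the double sum collapses to $C(z,w)\big((\sigma_\alpha)_+(z)+\overline{(\sigma_\alpha)_+(w)}-\widehat{\sigma_\alpha}(0)\big)$.

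The next step is to identify $(\sigma_\alpha)_+$ in terms of $b$. Writing $\tfrac{\alpha+b}{\alpha-b}=\tfrac{2}{1-\overline{\alpha}b}-1$ and combining the defining property of $\sigma_\alpha$ with the Herglotz-type identity $P[\mu]=2\,\Re\mu_+-\widehat{\mu}(0)$ valid for pluriharmonic $\mu$ (a direct consequence of \eqref{RPmeasureVanishingCoeff}), one sees that $(\sigma_\alpha)_+(z)-\tfrac{1}{1-\overline{\alpha}b(z)}$ is holomorphic on $\bD^n$ with constant real part, hence is a constant $c_0$ with $2\,\Re c_0=\widehat{\sigma_\alpha}(0)-1$. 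Substituting, the constants cancel in the symmetric combination above and
\[
(\sigma_\alpha)_+(z)+\overline{(\sigma_\alpha)_+(w)}-\widehat{\sigma_\alpha}(0)=\frac{1}{1-\overline{\alpha}b(z)}+\frac{1}{1-\alpha\overline{b(w)}}-1=\frac{1-b(z)\overline{b(w)}}{(1-\overline{\alpha}b(z))(1-\alpha\overline{b(w)})},
\]
where the last equality is a one-line manipulation using $|\alpha|=1$. Multiplying through by $C(z,w)$ and then by $1-\overline{\alpha}b(z)$, and recalling $k_b(z,w)=(1-b(z)\overline{b(w)})C(z,w)$ together with the definition \eqref{(3.1.) AlekDoub} of $V_{b,\alpha}$, this yields $\big(V_{b,\alpha}C(\cdot,w)\big)(z)=k_b(z,w)/(1-\alpha\overline{b(w)})$, i.e.\ exactly \eqref{(3.2.) AlekDoub}; in particular $V_{b,\alpha}C(\cdot,w)\in\mathcal{H}(b)$.

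It remains to upgrade this to a statement about all of $L^2(\sigma_\alpha)$. Since the reproducing kernels $k_w$ span $\mathcal{H}(b)$ and $U_{b,\alpha}k_w=(1-\alpha\overline{b(w)})C(\cdot,w)$, the functions $\{C(\cdot,w):w\in\bD^n\}$ span a dense subspace $\mathcal{D}$ of $H^2(\sigma_\alpha)$; comparing \eqref{(3.2.) AlekDoub} with $U_{b,\alpha}k_w=(1-\alpha\overline{b(w)})C(\cdot,w)$ shows $V_{b,\alpha}=U_{b,\alpha}^{*}$ on $\mathcal{D}$, so $V_{b,\alpha}$ is isometric there. It therefore extends to an isometry of $H^2(\sigma_\alpha)$ onto $\mathcal{H}(b)$, and this extension coincides with $V_{b,\alpha}$ itself: approximating $g\in H^2(\sigma_\alpha)$ by $g_n\in\mathcal{D}$ in $L^2(\sigma_\alpha)$, the sequence $V_{b,\alpha}g_n$ converges in $\mathcal{H}(b)$, hence pointwise on $\bD^n$, while for fixed $z$ one has $(g_n\sigma_\alpha)_+(z)\to(g\sigma_\alpha)_+(z)$ by Cauchy--Schwarz, so the limit is $V_{b,\alpha}g$. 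Hence $V_{b,\alpha}|_{H^2(\sigma_\alpha)}=U_{b,\alpha}^{*}$ is unitary onto $\mathcal{H}(b)$. Finally, if $g\perp H^2(\sigma_\alpha)$ in $L^2(\sigma_\alpha)$ then $\int_{\bT^n}\overline{\zeta}^{\,k}g\,d\sigma_\alpha=\langle g,\zeta^k\rangle_{L^2(\sigma_\alpha)}=0$ for all $k\in\bN^n$, so expanding $C(z,\zeta)=\sum_k z^k\overline{\zeta}^{\,k}$ and integrating term by term gives $(g\sigma_\alpha)_+\equiv0$ and thus $V_{b,\alpha}g=0$. Combining, $V_{b,\alpha}$ is isometric on $H^2(\sigma_\alpha)$ and vanishes on its orthogonal complement --- i.e.\ it is a partial isometry with initial space $H^2(\sigma_\alpha)$ and range $\mathcal{H}(b)$ --- and \eqref{(3.2.) AlekDoub} holds.

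The main obstacle is concentrated in the first two steps: carefully justifying the termwise integration against the finite measure $\sigma_\alpha$, correctly invoking the vanishing of its mixed Fourier coefficients, and carrying out the Herglotz-type identification of $(\sigma_\alpha)_+$ with $\tfrac{1}{1-\overline{\alpha}b}$ up to an additive constant which --- conveniently --- cancels in the combination $(\sigma_\alpha)_+(z)+\overline{(\sigma_\alpha)_+(w)}-\widehat{\sigma_\alpha}(0)$. Everything afterwards is routine algebra with $|\alpha|=1$ together with standard reproducing-kernel and Hilbert-space arguments.
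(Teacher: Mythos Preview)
The paper does not prove this theorem: it is quoted verbatim as a special case of Theorem~3.2 in \cite{ComparisonTheoremBall} and is used as a black box throughout, so there is no ``paper's own proof'' to compare against.

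That said, your argument is correct and self-contained. The computation of $\big(C(\cdot,w)\sigma_\alpha\big)_+(z)$ via the double Fourier expansion, the use of \eqref{RPmeasureVanishingCoeff} to collapse it to $C(z,w)\big((\sigma_\alpha)_+(z)+\overline{(\sigma_\alpha)_+(w)}-\widehat{\sigma_\alpha}(0)\big)$, and the Herglotz-type identification $(\sigma_\alpha)_+=\tfrac{1}{1-\overline\alpha b}+c_0$ with $2\Re c_0=\widehat{\sigma_\alpha}(0)-1$ are all valid; the cancellation of $c_0$ in the symmetric combination is exactly what makes \eqref{(3.2.) AlekDoub} come out cleanly. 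The bootstrap is also sound: agreement of $V_{b,\alpha}$ with $U_{b,\alpha}^*$ on the dense span of Cauchy kernels, the pointwise-vs.-$\mathcal{H}(b)$ convergence argument to identify the extension with the original integral operator, and the vanishing of $(g\sigma_\alpha)_+$ for $g\perp H^2(\sigma_\alpha)$ together yield the partial-isometry statement. One small point worth making explicit when you write it up: the identity $P[\mu]=2\Re\mu_+-\widehat\mu(0)$ for $\mu\in\mathrm{RP}(\bT^n)$ really does require the vanishing of the mixed Fourier coefficients (for $n\geq2$ the product Poisson kernel contains many cross terms, and it is precisely \eqref{RPmeasureVanishingCoeff} that kills their contribution), so a one-line justification there would not be out of place.
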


\begin{remark}
Note that this implies that the restriction of $V_{b, \alpha}$ to $H^2(\sigma_\alpha)$ is a unitary operator onto $\mathcal{H}(b)$.
\end{remark}

Finally, a Borel set $S \subset \bT^n$ is called a \emph{carrier} of the measure $\mu$ if $\mu(A) = \mu(A \cap S)$ for all Borel subsets $A \subset \bT^n$, and $S$ is called a \emph{null set for} $A(\bD^n)^\perp$ if $|\mu|(S) = 0$ for all $\mu \in A(\bD^n)^\perp$. For a Borel set $S \subset \bT^n$, we denote by $\mu|_{S}$ the restriction of $\mu$ to $S$ defined by $\mu|_{S}(A) := \mu(A \cap S)$ for all Borel sets $A \subset \bT^n$.

\section{Degenerate pluriharmonic measures and decompositions of measures on $\bT^2$} \label{Section: DegenerateParts}

Before proving our main theorem, we must first address an issue arising as a consequence of the product structure of the $n$-torus -- namely that a pluriharmonic measure on $\bT^n = \bT^k \times \bT^l$, where $k + l = n$, may have a component carried by $E \times \bT^l$, where $E$ is a set of Lebesgue measure zero on $\bT^k$. In fact, for $\bT^2 = \bT \times \bT$, the product of \emph{any} singular measure on $\bT$ with Lebesgue measure on $\bT$ will be a pluriharmonic measure, and in general, the product of a Borel measure $\mu$ on $\bT^k$ with Lebesgue measure on $\bT^l$  will be a pluriharmonic measure if and only if $\mu \in$ RP$(\bT^k)$. What is less obvious however, is that the converse also holds: all pluriharmonic measures carried by $E \times \bT^l$ are of this form. This is a consequence of a theorem due to Ahern, which will be presented below.  

Denote by $\pi \mu$ the measure on $\bT^k$ defined by $(\pi \mu)(S) = \mu(S \times \bT^l)$ for every Borel set $S \subset \bT^l$, and let $m_s$ be Lebesgue measure on $\bT^s$ for $s \in \bN$. Then the following holds.
\begin{thm} [Proposition 1 in \cite{Ahern}]\label{Ahern}
    Let $\mu \in RP(\bT^n)$ and let $E \subset \bT^k$ be a Borel set such that $m_k(E) = 0$; then $\mu|_{E \times \bT^l} = (\pi \mu)|_E \times m_l$. 
\end{thm}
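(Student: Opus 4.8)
The plan is to slice the problem in the two groups of variables and reduce it to the classical multidimensional F. and M. Riesz theorem on $\bT^k$. Write $\bT^n = \bT^k \times \bT^l$ and recall that $\mu \in RP(\bT^n)$ is a finite positive measure; disintegrate it over its first marginal $\pi\mu$, obtaining a $(\pi\mu)$-measurable family $\{\mu_x\}_{x \in \bT^k}$ of probability measures on $\bT^l$ with $\mu = \int_{\bT^k} \mu_x \, d(\pi\mu)(x)$. For a Borel set $A \subset E \times \bT^l$ only the values $x \in E$ contribute to this integral, so $\mu|_{E \times \bT^l} = \int_E \mu_x \, d(\pi\mu)(x)$, and the whole statement reduces to showing that $\mu_x = m_l$ for $(\pi\mu)$-almost every $x \in E$; once this is known, $\mu|_{E\times\bT^l} = \int_E m_l\, d(\pi\mu)(x) = (\pi\mu)|_E \times m_l$.

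To exploit the pluriharmonicity, for $q \in \bZ^l$ put $F_q(x) := \int_{\bT^l} \overline{y}^{\,q} \, d\mu_x(y)$, a Borel function on $\bT^k$ with $|F_q| \le 1$ and $F_0 = 1$ $(\pi\mu)$-a.e. A Fubini computation identifies the Fourier coefficients of the complex measure $\rho_q := F_q \cdot (\pi\mu)$ on $\bT^k$: namely $\widehat{\rho_q}(p) = \hat\mu(p,q)$ for every $p \in \bZ^k$. Now fix $q \ne 0$. If the coordinates of $q$ are not all of the same sign, then for every $p$ the index $(p,q)$ has a strictly positive and a strictly negative coordinate, so $\hat\mu(p,q) = 0$ by \eqref{RPmeasureVanishingCoeff} and $\rho_q = 0$. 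If instead $q \ge 0$, then whenever $p \notin \bN^k$ the index $(p,q)$ again has coordinates of both signs, so $\widehat{\rho_q}$ is supported in $\bN^k$; the case $q \le 0$ is analogous with $\bN^k$ replaced by $-\bN^k$.

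In every case the spectrum of $\rho_q$ is contained in $\bN^k$ or in $-\bN^k$, hence in a closed half-space of $\bR^k$ containing no line; the multidimensional F. and M. Riesz theorem (Bochner's theorem) therefore yields $\rho_q \ll m_k$ for every $q \ne 0$. Since $m_k(E) = 0$, the restriction of $\rho_q$ to $E$ vanishes, i.e. $F_q(x) = 0$ for $(\pi\mu)$-almost every $x \in E$. As $\bZ^l \setminus \{0\}$ is countable, there is a single $(\pi\mu)$-null set $N \subset E$ off which $\widehat{\mu_x}(q) = F_q(x) = 0$ for all $q \ne 0$ while $\widehat{\mu_x}(0) = 1$; by uniqueness of Fourier coefficients of measures on $\bT^l$, this forces $\mu_x = m_l$ for $x \in E \setminus N$, which is exactly what was needed.

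The analytic heart of the argument is the appeal to the F. and M. Riesz / Bochner theorem on $\bT^k$, and this is the step I expect to require the most care to set up (in particular, isolating the correct geometric hypothesis: spectrum inside a half-space of $\bR^k$ containing no line). The remaining ingredients — measurability of the disintegration, the Fubini exchange giving $\widehat{\rho_q}(p) = \hat\mu(p,q)$, and the verification that disintegrating $\mu$ and then restricting the base variable to $E$ reproduces $\mu|_{E\times\bT^l}$ — are routine measure theory but should be spelled out.
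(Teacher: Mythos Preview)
The paper does not contain a proof of this theorem; it is quoted verbatim as Proposition~1 of Ahern's paper \cite{Ahern} and used as a black box. Your argument is therefore not being compared against anything in the present manuscript.

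That said, your proof is correct. The disintegration $\mu = \int_{\bT^k} \mu_x\, d(\pi\mu)(x)$ is legitimate on the Polish space $\bT^l$, the Fubini identity $\widehat{\rho_q}(p) = \hat\mu(p,q)$ is immediate, and the case split on the sign pattern of $q$ correctly shows that for every $q \neq 0$ the spectrum of $\rho_q$ is either empty or contained in $\pm\bN^k$. The only point worth tightening is the invocation of the multidimensional F.~and M.~Riesz theorem: the relevant hypothesis is not ``half-space containing no line'' (a closed half-space through the origin always contains its boundary hyperplane) but rather that the spectrum lies in a salient closed convex cone $C$ with $C \cap (-C) = \{0\}$, which is precisely Bochner's condition and is satisfied by $\bN^k$. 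With that wording fixed, the argument is complete and is essentially the standard route to Ahern's result.
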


In general, a pluriharmonic measure will not be entirely degenerate, but may have \emph{parts} that are degenerate. In order to analyze the degenerate parts of pluriharmonic measures, we will need the following (unpublished) decomposition of annihilators of the bidisc algebra due to Brian Cole. 
\begin{thm}[Theorem $29.$ in \cite{Gamelin}] \label{Cole}
Let  $\nu \in A(\bD^2)^\perp$. Then
$$
\nu = \nu_0 + \nu_1 + \nu_ 2,
$$
where $\nu_0, \nu_1, \nu_2$ are mutually singular measures in $A(\bD^2)^\perp$; the measure $\nu_0$ is absolutely continuous with respect to some representing measure for the origin; $\nu_1 = h(\zeta_1, \zeta_2) d \tau(\zeta_1) dm(z_2)$, where $\tau$ is a measure on $\bT$ carried by a set of Lebesgue measure zero, $h \in L^1(\tau \times dm)$ and $h(\zeta_1, \cdot)$ belongs to the Hardy space $H^1_0(\bT)$ -- consisting of functions in $H^1$ that vanish at the origin -- for each fixed $\zeta_1 \in \bT$; and $\nu_2$ has a description similar to that of $\nu_1$ except that the roles of $\zeta_1$ and $\zeta_2$ are interchanged.   
\end{thm}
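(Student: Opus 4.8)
The plan is to split $\nu$ into three pieces by two successive ``peeling'' operations based on disintegration along the coordinate projections, and then to reduce the remaining statement to a single point about annihilating measures whose coordinate projections are absolutely continuous; that last point is the hard one and is where the theory of representing measures for $A(\bD^2)$ has to enter.

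I would begin by disintegrating $\nu$ over the projection $\pi_1\colon\bT^2\to\bT$ onto the first coordinate. Write $\kappa:=\pi_1|\nu|$, let $\tau$ be the part of $\kappa$ singular to $m$ and let $F\subset\bT$ be an $m$-null Borel carrier of $\tau$; disintegration gives complex slice measures $\{\nu_{\zeta_1}\}$ on $\bT$ with $\|\nu_{\zeta_1}\|\le 1$ and $d\nu=d\nu_{\zeta_1}(\zeta_2)\,d\kappa(\zeta_1)$. The key observation is that $\nu_{\zeta_1}\in A(\bD)^\perp$ for $\tau$-a.e.\ $\zeta_1$: given $f\in A(\bD)$ in the variable $\zeta_2$, testing $\nu$ against $g(\zeta_1)f(\zeta_2)\in A(\bD^2)$ for every $g\in A(\bD)$ shows that $\big(\zeta_1\mapsto\int f\,d\nu_{\zeta_1}\big)\,d\kappa$ lies in $A(\bD)^\perp$ on $\bT$, hence is $\ll m$ by the one-variable F.\ and M.\ Riesz theorem, hence vanishes on $F$; running over a countable dense subset of $A(\bD)$ gives the claim. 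A second application of F.\ and M.\ Riesz on each slice yields $\nu_{\zeta_1}=h(\zeta_1,\cdot)\,dm$ with $h(\zeta_1,\cdot)\in H^1_0(\bT)$, so $\nu_1:=\nu|_{F\times\bT}=h(\zeta_1,\zeta_2)\,d\tau(\zeta_1)\,dm(\zeta_2)$ has exactly the asserted form, with $h\in L^1(\tau\times m)$; moreover $\nu_1\in A(\bD^2)^\perp$ since, for each fixed $\zeta_1$, the product of $f(\zeta_1,\cdot)\in A(\bD)$ with $h(\zeta_1,\cdot)\in H^1_0$ again lies in $H^1_0$ and so has vanishing integral against $m$. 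Repeating the argument with the coordinates interchanged, applied to $\nu-\nu_1\in A(\bD^2)^\perp$, produces $\nu_2$ of the mirror form; the measures $\nu_0:=\nu-\nu_1-\nu_2$, $\nu_1$, $\nu_2$ all lie in $A(\bD^2)^\perp$ and are mutually singular because their carriers $(\bT\setminus F_1)\times(\bT\setminus F_2)$, $F_1\times\bT$ and $(\bT\setminus F_1)\times F_2$ are pairwise disjoint. By construction $\pi_1|\nu_0|\ll m$ and $\pi_2|\nu_0|\ll m$.

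It then remains to show that such a $\nu_0$ is absolutely continuous with respect to a single representing measure for the origin. Here I would apply the abstract F.\ and M.\ Riesz theorem of Glicksberg--K\"onig--Seever: decomposing $\nu_0$ along the band generated by the representing measures of evaluation at $0$ for $A(\bD^2)$ (which, by convexity of that set, coincides with the band of a single representing measure) gives $\nu_0=\mu_a+\mu_s$ with $\mu_a$ absolutely continuous with respect to one such representing measure, $\mu_s\in A(\bD^2)^\perp$, and $\mu_s$ singular to every representing measure for the origin. One checks that $\mu_s$ inherits $\pi_i|\mu_s|\ll m$ and, since every representing measure for the origin projects to $m$ in each variable (its one-variable projections are representing measures for $0$ in $A(\bD)$, hence equal $m$), that $\mu_s\perp m\times m$; consequently the slices in a coordinate disintegration of $\mu_s$ are singular probability measures, depending ``anti-analytically'' on the base variable in the sense dictated by the annihilation identities of the previous step. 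The main obstacle is to convert this structure into a contradiction — equivalently, to build from those slices an honest representing measure for the origin that fails to be singular to $\mu_s$, forcing $\mu_s=0$ and $\nu_0=\mu_a$. This is the delicate step, and it is where a finer analysis of the (non-closed) null sets of $A(\bD^2)^\perp$ and the classical theory of representing and Jensen measures for the bidisc algebra come in; it is the technical heart of Cole's (unpublished) argument.
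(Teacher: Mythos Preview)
The paper does not prove this theorem at all: it is stated as Theorem~29 of \cite{Gamelin} (Cole's unpublished decomposition) and used as a black box. So there is no proof in the paper to compare against; your proposal is an attempt to supply an argument the paper simply imports.

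On the substance of what you wrote: the first two paragraphs --- disintegrate over $\pi_1$, use one-variable F.\ and M.\ Riesz on the base and on the slices to peel off $\nu_1$, then symmetrize for $\nu_2$ --- are the standard and correct way to produce the three mutually singular pieces with the stated slice descriptions. Your own last paragraph is where the real content lies, and you correctly flag it as incomplete: you invoke the abstract F.\ and M.\ Riesz/band decomposition to split $\nu_0=\mu_a+\mu_s$, but you do not actually carry out the step that forces $\mu_s=0$. The sketch you give (``build from those slices an honest representing measure for the origin that fails to be singular to $\mu_s$'') is the right shape of the argument, but as written it is a plan rather than a proof; in particular, showing that the band generated by representing measures for $0$ in $A(\bD^2)$ captures every annihilator with $\pi_i|\nu_0|\ll m$ is precisely the nontrivial point and needs the Cole/Gamelin machinery you allude to. So your proposal is a reasonable outline whose decisive step remains to be filled in --- but in any case the present paper neither proves nor sketches this result.
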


By applying a change of variables and the fact that pluriharmonic measures are characterized by satisfying \eqref{RPmeasureVanishingCoeff}, Cole's decomposition can be used to obtain a similar decomposition for measures in RP$(\bT^2)$. 

Now denote by $T$ the change of variables $T: (z_1, z_2) \mapsto (z_1 \overline{z_2})$, and denote by $T_*(\mu)$ the pushforward measure, defined by $T_*(\mu)(S) := \mu(T^{-1}(S))$ for all Borel sets $S$. If $\mu \in$ RP$(\bT^2)$, then $\nu := z_1 z_2 T_*(\mu) \in A(\bD^2)^\perp$, and conversely, for any real measure $\nu \in A(\bD^2)^\perp$, $\mu :=  T_*(\overline{z_1 z_2} \nu) \in$ RP$(\bT^2)$ (note that $T = T^{-1}$ is a bijection). By applying Cole's decomposition to $\nu$, we see that any measure $\mu \in$ RP$(\bT^2)$ can be decomposed as 
\begin{equation} \label{DecompRP}
\mu = \mu_0 + \mu_1 + \mu_2,  
\end{equation}
where $\mu_0, \mu_1, \mu_2$ are mutually singular measures in RP$(\bT^2)$; the measure $\mu_0 = T_*(\overline{z_1 z_2} \nu_0) = \overline{z_1} z_2 T_*(\nu_0)$ for some $\nu_0 \in A(\bD^2)^\perp$ that is absolutely continuous with respect to some representing measure for $(0,0)$; $\mu_1 = \tau_1 \times dm$, where $\tau_1$ is a real measure on $\bT$ carried by some set $E_1 \subset \bT$ of Lebesgue measure zero; $\mu_2 = dm \times \tau_2$ where $\tau_2$ is a real measure on $\bT$ carried by some set $E_2 \subset \bT$ of Lebesgue measure zero.

For more details on how $T$ relates measures in RP$(\bT^2)$ to measures in $A(\bD^2)^\perp$, see pages $4$ and $5$ in \cite{NecessaryConditionsOnRP} and pages $145$ and $146$ in \cite{Gamelin}.

\section{Main result}

We are now ready to prove our main result, which establishes a connection between absolute continuity of two pluriharmonic measures and containment of a concrete function in a certain de Branges-Rovnyak space. We begin by proving the first implication: namely that mutual absolute continuity implies containment of the function in a certain de Branges-Rovnyak space.

\begin{lemma} \label{FirstImplicationGeneral}
Let $b_1, b_2: \bD^n \ra \bD$ be non-constant holomorphic functions, and let $\sigma^1 = \sigma_\alpha[b_1]$ and $\sigma^2 = \sigma_\alpha[b_2]$, where $\alpha \in \bT$, be the corresponding Clark-Aleksandrov measures. As before, let $k_{b_1}( \cdot , w)$ denote the reproducing kernel of $\mathcal{H}(b_1)$.

Then the following implication holds:

If $\sigma^1 \ll \sigma^2 \text{ and } \frac{d \sigma^1}{d \sigma^2} \in L^2(\sigma^2)$, then $\frac{\alpha - b_2}{\alpha - b_1} k_{b_1}( \cdot , w) \in \mathcal{H}(b_2)$
    for all $w \in \bD^n$.

\end{lemma}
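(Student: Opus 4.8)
The plan is to work through the unitary/partial-isometry machinery of Theorem \ref{Theorem 3.2. AlekDoub}, transferring the hypothesis about measures into a statement about functions in $\mathcal{H}(b_2)$. First I would fix $w \in \bD^n$ and observe that the Cauchy kernel $C(\cdot, w)$ belongs to $L^2(\sigma^2)$ (indeed it lies in $H^2(\sigma^2)$, being a limit of polynomials, since $C(\cdot,w) = \sum_k \overline{w}^k z^k$ converges in $L^2$ of any finite measure). By \eqref{(3.2.) AlekDoub}, $V_{b_2,\alpha}$ sends $C(\cdot,w)$ to $\frac{1}{1-\alpha\overline{b_2(w)}} k_{b_2}(\cdot,w) \in \mathcal{H}(b_2)$. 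So the target function $\frac{\alpha - b_2}{\alpha - b_1} k_{b_1}(\cdot, w)$ should be realized as $V_{b_2,\alpha}$ applied to some explicit element of $L^2(\sigma^2)$; the job is to identify that element and check it genuinely lies in $L^2(\sigma^2)$.

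The key computation is to unwind the definition \eqref{(3.1.) AlekDoub}: $(V_{b_2,\alpha} g)(z) = (1-\overline{\alpha}b_2(z))(g\sigma^2)_+(z)$. I want to choose $g$ so that $g\sigma^2$ behaves, as far as its Cauchy transform is concerned, like a multiple of $\sigma^1$ or of $C(\cdot,w)\sigma^1$. The natural guess is $g = \frac{d\sigma^1}{d\sigma^2} \cdot (\text{something involving } C(\cdot,w))$: since $\sigma^1 \ll \sigma^2$ with Radon–Nikodym derivative in $L^2(\sigma^2)$, the function $h := \frac{d\sigma^1}{d\sigma^2}$ is in $L^2(\sigma^2)$, and I would like $g$ to be $h$ times the restriction of $C(\cdot,w)$ to the torus — but one must be careful that $C(\cdot,w)$ restricted to $\bT^n$ is bounded (it is, for fixed $w \in \bD^n$, since $|1-z_j\overline{w_j}| \geq 1 - |w_j| > 0$), so $g := h \cdot \overline{C(w,\cdot)}$ or the appropriate conjugate pairing lies in $L^2(\sigma^2)$. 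Then $(g\sigma^2)_+ = \int C(z,\zeta) C(\zeta, w)\, d\sigma^1(\zeta)$ should be computed; using the reproducing property of the Clark measure — i.e. that $\int C(z,\zeta)\overline{C(y,\zeta)}\,d\sigma^1(\zeta)$ relates to $k_{b_1}$ via $U_{b_1,\alpha}$ and Theorem \ref{Theorem 3.2. AlekDoub} applied to $b_1$ — I expect to get $(g\sigma^2)_+(z) = \frac{c}{1-\overline{\alpha}b_1(z)} k_{b_1}(z,w)$ for an explicit constant $c$ depending on $w$, $\alpha$, $b_1(w)$. Plugging back in, $(V_{b_2,\alpha}g)(z) = (1-\overline{\alpha}b_2(z)) \cdot \frac{c}{1-\overline{\alpha}b_1(z)} k_{b_1}(z,w)$, and since $1-\overline{\alpha}b_j = -\overline{\alpha}(b_j - \alpha) = \overline{\alpha}(\alpha - b_j)$ the factors $\overline{\alpha}$ cancel and we are left with $c' \cdot \frac{\alpha - b_2}{\alpha - b_1} k_{b_1}(\cdot,w)$. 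Because $V_{b_2,\alpha}$ maps into $\mathcal{H}(b_2)$ (Theorem \ref{Theorem 3.2. AlekDoub}), this exhibits $\frac{\alpha - b_2}{\alpha - b_1}k_{b_1}(\cdot,w)$ as an element of $\mathcal{H}(b_2)$, up to the nonzero constant, which finishes the proof.

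The main obstacle I anticipate is the bookkeeping in the middle step — verifying that $(h \cdot \overline{C(w,\cdot)})\,\sigma^2 = (\overline{C(w,\cdot)})\,\sigma^1$ and then correctly evaluating its Cauchy transform as a constant multiple of $\frac{1}{1-\overline{\alpha}b_1(\cdot)}k_{b_1}(\cdot,w)$. Concretely this amounts to recognizing that for $g_1 := \overline{C(w,\cdot)} \in H^2(\sigma^1)$ one has, by \eqref{(3.2.) AlekDoub} applied to $b_1$, that $V_{b_1,\alpha}g_1 = \frac{1}{1-\alpha\overline{b_1(w)}}k_{b_1}(\cdot,w)$, hence $(g_1\sigma^1)_+(z) = \frac{1}{1-\overline{\alpha}b_1(z)}\cdot\frac{1}{1-\alpha\overline{b_1(w)}}k_{b_1}(z,w)$; one then needs $\sigma^1$ to have total mass $1$ (true for Clark measures since $\|\sigma_\alpha\| = P[\sigma_\alpha](0) = \frac{1-|b(0)|^2}{|\alpha-b(0)|^2}$ need not be $1$ in general — so I should carry the mass as part of the constant, or normalize, rather than assume it). Care is also needed at points $z$ where $\alpha - b_1(z) = 0$, but since $b_1: \bD^n \to \bD$ maps into the open disc, $b_1(z) \neq \alpha \in \bT$ for all $z \in \bD^n$, so $\frac{\alpha - b_2}{\alpha - b_1}$ is a bona fide holomorphic function on $\bD^n$ and no removable-singularity issue arises. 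Finally, one should note the hypothesis $\frac{d\sigma^1}{d\sigma^2} \in L^2(\sigma^2)$ is used exactly once — to guarantee $g \in L^2(\sigma^2)$ so that $V_{b_2,\alpha}g$ makes sense and lands in $\mathcal{H}(b_2)$.
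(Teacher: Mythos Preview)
Your proposal is correct and follows essentially the same route as the paper: choose $g = \frac{d\sigma^1}{d\sigma^2}\, C(\cdot,w) \in L^2(\sigma^2)$, use $g\sigma^2 = C(\cdot,w)\sigma^1$, and then apply \eqref{(3.2.) AlekDoub} with $b_1$ to identify $(1-\overline{\alpha}b_2)(C(\cdot,w)\sigma^1)_+$ as a nonzero constant times $\frac{\alpha-b_2}{\alpha-b_1}k_{b_1}(\cdot,w)$. Two small clarifications: by conjugate symmetry $\overline{C(w,\cdot)} = C(\cdot,w)$, so your hedging there is unnecessary; and no mass normalization is needed anywhere --- formula \eqref{(3.2.) AlekDoub} holds for $\sigma^1$ as stated, so you can drop that concern.
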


\begin{proof}

We can assume that $\alpha=1$ without loss of generality. 

By applying the formulas \eqref{(3.1.) AlekDoub} and \eqref{(3.2.) AlekDoub}, we get that
\begin{multline} \label{(4.1) AlekDoub}
(1 - b_2(z)) (C(\cdot, w) \sigma^1)_+ (z) = (1 - b_2(z))\frac{1-b_1(z)}{1-b_1(z)} (C(\cdot, w) \sigma^1)_+ (z) \\
= \frac{1-b_2(z)}{1-b_1(z)} (V_{b_1, 1} C( \cdot, w))(z) = \frac{1-b_2(z)}{1-b_1(z)} \frac{1}{1- \overline{b_1(w)}} k_{b_1}(z,w).
\end{multline}
Equation \eqref{(4.1) AlekDoub} will be used several times in this paper.

By the definition of $V_{b_2}$ we have
$$
V_{b_2} \left( \frac{d \sigma^1}{d \sigma^2} C( \cdot, w) \right) = (1- b_2)  \left( \frac{d \sigma^1}{d \sigma^2} C( \cdot, w) \sigma^2 \right)_+ = (1- b_2) ( C(\cdot, w) \sigma^1)_+.
$$
By Theorem \ref{Theorem 3.2. AlekDoub} $V_{b_2}$ maps $L^2(\sigma^2)$ into $\mathcal{H}(b_2)$, and thus combining the above expression with \eqref{(4.1) AlekDoub} finishes the proof. 

\end{proof}

We will now prove the reverse implication. For this we will use the following lemma, which establishes an important connection between containment of $\frac{\alpha - b_2}{\alpha - b_1} k_{b_1}( \cdot , w)$ in $\mathcal{H}(b_2)$, and existence of a combination of $\sigma^1$ and $\sigma^2$ that annihilates the polydisc algebra. This will be used to determine when $\sigma^1 \ll \sigma^2$. 

\begin{lemma} \label{Containment of function in de Branges Rovnyak condition}
    Let $w \in \bD^n$ be fixed, let $b_1, b_2: \bD^n \ra \bD$ be non-constant holomorphic functions, and let $\sigma^1 = \sigma_1[b_1]$ and $\sigma^2 = \sigma_1[b_2]$ be the corresponding Clark-Aleksandrov measures. As before, let $k_{b_1}( \cdot , w)$ denote the reproducing kernel of $\mathcal{H}(b_1)$. 

    Then $\frac{1 - b_2}{1 - b_1} k_{b_1}( \cdot , w) \in \mathcal{H}(b_2)$ if and only if $C(\cdot, w) \sigma^1 - q_w \sigma^2 \in A(\bD^n)^\perp$ for some function $q_w \in H^2(\sigma^2)$. 
\end{lemma}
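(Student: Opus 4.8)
The plan is to establish both directions by relating the function $\tfrac{1-b_2}{1-b_1}k_{b_1}(\cdot,w)$ to the image of a Cauchy transform under the map $V_{b_2}$, using the kernel of $V_{b_2}$ to detect annihilators of $A(\bD^n)$. The starting point is the identity \eqref{(4.1) AlekDoub}, which rewrites $\tfrac{1-b_2}{1-b_1}k_{b_1}(\cdot,w)$ (up to the nonzero scalar $1-\overline{b_1(w)}$) as $(1-b_2)(C(\cdot,w)\sigma^1)_+$. So the question ``is $\tfrac{1-b_2}{1-b_1}k_{b_1}(\cdot,w)\in\mathcal H(b_2)$?'' becomes ``is $(1-b_2)(C(\cdot,w)\sigma^1)_+\in\mathcal H(b_2)$?''. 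I would then try to realize this function as $V_{b_2}$ applied to something: by the definition of $V_{b_2}$ in \eqref{(3.1.) AlekDoub}, for any $g\in L^2(\sigma^2)$ we have $V_{b_2}g=(1-b_2)(g\sigma^2)_+$, and since $V_{b_2}$ maps $L^2(\sigma^2)$ \emph{onto} $\mathcal H(b_2)$ (it is a partial isometry, and unitary on $H^2(\sigma^2)$, by Theorem \ref{Theorem 3.2. AlekDoub}), the target function lies in $\mathcal H(b_2)$ if and only if $(1-b_2)(C(\cdot,w)\sigma^1)_+ = V_{b_2}g$ for some $g\in L^2(\sigma^2)$, equivalently $(C(\cdot,w)\sigma^1)_+ = (g\sigma^2)_+$ for some $g\in L^2(\sigma^2)$ (dividing by the outer-type factor $1-b_2$, which is legitimate on $\bD^n$ since $b_2$ maps into $\bD$).

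For the forward direction, suppose $\tfrac{1-b_2}{1-b_1}k_{b_1}(\cdot,w)\in\mathcal H(b_2)$. Actually I want to extract the function $q_w$ more carefully: since $V_{b_2}$ restricted to $H^2(\sigma^2)$ is unitary onto $\mathcal H(b_2)$, there is a unique $q_w\in H^2(\sigma^2)$ with $V_{b_2}q_w = \tfrac{1}{1-\overline{b_1(w)}}\cdot\tfrac{1-b_2}{1-b_1}k_{b_1}(\cdot,w) = (1-b_2)(C(\cdot,w)\sigma^1)_+$. On the other hand $V_{b_2}q_w=(1-b_2)(q_w\sigma^2)_+$, so $(1-b_2)\big((C(\cdot,w)\sigma^1)_+ - (q_w\sigma^2)_+\big)\equiv 0$ on $\bD^n$, and since $1-b_2$ is zero-free on $\bD^n$ we get $(C(\cdot,w)\sigma^1)_+ = (q_w\sigma^2)_+$, i.e. $\big(C(\cdot,w)\sigma^1 - q_w\sigma^2\big)_+\equiv 0$. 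By the discussion in the Preliminaries, a measure whose Cauchy transform vanishes identically is of the form $\overline{\mu}$ with $\mu\in A(\bD^n)^\perp$; I need to check that here the measure $C(\cdot,w)\sigma^1 - q_w\sigma^2$ itself (not just its conjugate) lies in $A(\bD^n)^\perp$, i.e.\ that $\widehat{\big(C(\cdot,w)\sigma^1 - q_w\sigma^2\big)}(k)=0$ for all $k\in\bN^n$ — but $\widehat{\nu}(k)=\int \overline{z}^k\,d\nu$ and $\nu_+(z)=\sum_{k\in\bN^n}\widehat\nu(k)z^k$, so $\nu_+\equiv 0$ says exactly $\widehat\nu(k)=0$ for all $k\in\bN^n$, which \emph{is} the statement $\nu\in A(\bD^n)^\perp$. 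Hence $C(\cdot,w)\sigma^1-q_w\sigma^2\in A(\bD^n)^\perp$ with $q_w\in H^2(\sigma^2)$, as desired.

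For the reverse direction, suppose $C(\cdot,w)\sigma^1 - q_w\sigma^2 \in A(\bD^n)^\perp$ for some $q_w\in H^2(\sigma^2)$. Then its Cauchy transform vanishes identically (by the same equivalence), so $(C(\cdot,w)\sigma^1)_+ = (q_w\sigma^2)_+$ on $\bD^n$. Multiplying by $1-b_2$ and using \eqref{(4.1) AlekDoub} on the left and the definition of $V_{b_2}$ on the right,
\begin{equation*}
\frac{1}{1-\overline{b_1(w)}}\,\frac{1-b_2}{1-b_1}\,k_{b_1}(\cdot,w) = (1-b_2)(C(\cdot,w)\sigma^1)_+ = (1-b_2)(q_w\sigma^2)_+ = V_{b_2}q_w \in \mathcal H(b_2),
\end{equation*}
and since $1-\overline{b_1(w)}\neq 0$ this gives $\tfrac{1-b_2}{1-b_1}k_{b_1}(\cdot,w)\in\mathcal H(b_2)$, completing the proof.

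\textbf{Expected main obstacle.} The routine manipulations are the two applications of \eqref{(4.1) AlekDoub} and the definition of $V_{b_2}$; the one point deserving genuine care is the passage between ``$\nu_+\equiv 0$'' and ``$\nu\in A(\bD^n)^\perp$'', including making sure it is $\nu$ itself and not $\overline\nu$ that annihilates the algebra — the Preliminaries phrase the correspondence in terms of conjugates, so I would spell out that $\nu_+\equiv 0 \iff \widehat\nu(k)=0\ \forall k\in\bN^n \iff \nu\in A(\bD^n)^\perp$ directly from the series expansion, bypassing the conjugation issue entirely. A secondary subtlety is the justification that one may cancel the factor $1-b_2$: this is fine because $b_2:\bD^n\to\bD$ so $1-b_2$ is a zero-free holomorphic function on $\bD^n$, hence $(1-b_2)F\equiv 0$ forces $F\equiv 0$ for holomorphic $F$.
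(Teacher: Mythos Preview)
Your proof is correct and follows essentially the same route as the paper: use the unitarity of $V_{b_2}\!\restriction_{H^2(\sigma^2)}$ to produce $q_w$, apply \eqref{(4.1) AlekDoub} to rewrite $(1-\overline{b_1(w)})^{-1}\tfrac{1-b_2}{1-b_1}k_{b_1}(\cdot,w)$ as $(1-b_2)(C(\cdot,w)\sigma^1)_+$, cancel $1-b_2$, and invoke the equivalence between vanishing Cauchy transform and membership in $A(\bD^n)^\perp$. The only cosmetic difference is that the paper cancels $1-b_2$ via the integral-domain property of holomorphic functions (using $b_2$ non-constant), whereas you observe more directly that $1-b_2$ is nowhere zero on $\bD^n$ since $b_2$ maps into $\bD$.
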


\begin{proof}
Since $V_{b_2}: H^2(\sigma^2) \mapsto \mathcal{H}(b_2)$ is unitary (see the remark following Theorem \ref{Theorem 3.2. AlekDoub}), there is a function $q_w \in H^2(\sigma^2)$ such that
$$
(1- \overline{b_1(w)})^{-1} \frac{1- b_2(z)}{1- b_1(z)} k_{b_1}( \cdot , w) = (V_{b_2} q_w)(z) = (1- b_2(z)) (q_w \sigma^2)_+ (z),
$$
if and only if $\frac{1 - b_2}{1 - b_1} k_{b_1}( \cdot , w) \in \mathcal{H}(b_2)$. By combining this with Equation \eqref{(4.1) AlekDoub}, we see that this is equivalent to the statement that
\begin{equation}\label{productEqual0}
(1- b_2(z)) (C(\cdot, w) \sigma^1 - q_w \sigma^2)_+ (z) = 0, \quad z \in \bD^n.
\end{equation}
Since both $(1-b_2(z))$ and $(C(\cdot, w) \sigma^1 - q_w \sigma^2)_+ (z)$ are analytic functions on $\bD^n$ and $1-b_2 \not\equiv 0$ by assumption, \eqref{productEqual0} holds if and only if
\begin{equation} \label{CauchyTransVanish}
(C(\cdot, w) \sigma^1 - q_w \sigma^2)_+ (z) \equiv 0.    
\end{equation}
Finally, \eqref{CauchyTransVanish} holds if and only if $(C(\cdot, w) \sigma^1 - q_w \sigma^2) \in A(\bD^n)^\perp$, which finishes the proof. \qedhere

\end{proof}

We are now ready to prove the second implication for parts of pluriharmonic measures that are carried by some null set for $A(\bD^n)^\perp$.

\begin{lemma} \label{RestrictNullset}
Let $b_1, b_2: \bD^n \ra \bD$ be non-constant holomorphic functions, and let $\sigma^1 = \sigma_\alpha[b_1]$ and $\sigma^2 = \sigma_\alpha[b_2]$, where $\alpha \in \bT$, be the corresponding Clark-Aleksandrov measures. As before, let $k_{b_1}( \cdot , w)$ denote the reproducing kernel of $\mathcal{H}(b_1)$. 

Let $N$ be a null set of $A(\bD^n)^\perp$. Then the following implication holds:

If $\frac{\alpha - b_2}{\alpha - b_1} k_{b_1}( \cdot , w) \in \mathcal{H}(b_2)$ for some $w \in \bD^n$, then $\sigma^1|_N \ll \sigma^2 \text{ and } \frac{d \sigma^1|_N}{d \sigma^2} \in L^2(\sigma^2)$.

\end{lemma}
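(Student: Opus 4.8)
The plan is to use Lemma \ref{Containment of function in de Branges Rovnyak condition} to convert the hypothesis into a statement about annihilating measures, and then to exploit the hypothesis that $N$ is a null set for $A(\bD^n)^\perp$ to deduce the absolute continuity claim. We may assume $\alpha = 1$. By Lemma \ref{Containment of function in de Branges Rovnyak condition}, the containment $\frac{1-b_2}{1-b_1} k_{b_1}(\cdot, w) \in \mathcal{H}(b_2)$ gives us a function $q_w \in H^2(\sigma^2)$ with $\eta := C(\cdot, w)\sigma^1 - q_w \sigma^2 \in A(\bD^n)^\perp$. Restricting this identity to $N$: since $\eta \in A(\bD^n)^\perp$ and $N$ is a null set for $A(\bD^n)^\perp$, we have $|\eta|(N) = 0$, hence $\eta|_N = 0$, i.e.
$$
C(\cdot, w)\,\sigma^1|_N = q_w\,\sigma^2|_N.
$$
Since $C(z,w) = \prod_j (1 - z_j\overline{w_j})^{-1}$ is bounded above and below in modulus on $\bT^n$ for each fixed $w \in \bD^n$, this identity shows $\sigma^1|_N \ll \sigma^2|_N \ll \sigma^2$, and the Radon–Nikodym derivative is $\frac{d\sigma^1|_N}{d\sigma^2} = C(\cdot,w)^{-1} q_w \cdot \mathbbm{1}_N$ (up to a $\sigma^2$-null set), which lies in $L^2(\sigma^2)$ because $q_w \in H^2(\sigma^2) \subset L^2(\sigma^2)$ and $C(\cdot,w)^{-1}\mathbbm{1}_N$ is bounded.

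The one point requiring a little care is the passage from the polynomial-space membership $q_w \in H^2(\sigma^2)$ to the measure identity $\eta|_N = 0$: Lemma \ref{Containment of function in de Branges Rovnyak condition} is stated for $q_w \in H^2(\sigma^2)$, the closed span of holomorphic polynomials in $L^2(\sigma^2)$, and one must make sure the product $q_w \sigma^2$ is well-defined as a measure and that the manipulation $\eta|_N = (C(\cdot,w)\sigma^1 - q_w\sigma^2)|_N = C(\cdot,w)\sigma^1|_N - q_w\sigma^2|_N$ is legitimate — this is routine since $q_w \in L^1(\sigma^2)$ and restriction to a Borel set commutes with multiplication by an $L^1$ density. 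The substantive input is simply that $N$ carries no mass for any measure in $A(\bD^n)^\perp$, which annihilates $\eta$ on $N$.

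I expect the main (and essentially only) obstacle to be purely bookkeeping: ensuring that all the objects $C(\cdot,w)\sigma^1$, $q_w\sigma^2$, and their restrictions are finite complex Borel measures so that the identity $\eta|_N = 0$ and the subsequent Radon–Nikodym argument go through cleanly, together with the elementary but necessary observation that $|C(z,w)|$ is bounded away from $0$ and $\infty$ on $\bT^n$ (uniformly in $z$, for fixed $w \in \bD^n$), which is what lets us invert $C(\cdot,w)$ and keep the derivative in $L^2$. No substitute for the F. and M. Riesz theorem is needed here — that difficulty is deferred to the companion lemmas handling the non-degenerate and degenerate parts, whereas the present lemma isolates exactly the part of the argument where the carrier being a null set for $A(\bD^n)^\perp$ does all the work.
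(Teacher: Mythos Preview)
Your proof is correct and follows essentially the same route as the paper: invoke Lemma~\ref{Containment of function in de Branges Rovnyak condition} to produce $q_w \in H^2(\sigma^2)$ with $C(\cdot,w)\sigma^1 - q_w\sigma^2 \in A(\bD^n)^\perp$, restrict to $N$ to kill this measure, and then divide by the bounded, nonvanishing factor $C(\cdot,w)$ to read off absolute continuity and the $L^2$ bound on the Radon--Nikodym derivative. Your additional bookkeeping remarks are accurate but not strictly necessary; the paper's version simply omits them.
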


\begin{proof}
We can assume that $\alpha=1$ without loss of generality.

By Lemma \ref{Containment of function in de Branges Rovnyak condition}, there is a function $q_w \in H^2(\sigma^2)$ such that
$$
(1- b_2(z)) (C(\cdot, w) \sigma^1 - q_w \sigma^2)_+ (z) = 0, \quad z \in \bD^n.
$$
Since $b_2(z) \neq 1$ on $\bD^n$, this means that 
$$
(C(\cdot, w) \sigma^1 - q_w \sigma^2)_+ (z) \equiv 0 \Ra (C(\cdot, w) \sigma^1 - q_w \sigma^2) \in A(\bD^n)^\perp.
$$
By assumption $N$ is a null set for $A(\bD^n)^\perp$, so
$$
C(\cdot, w) \sigma^1|_N - q_w \sigma^2_{N} \equiv 0.
$$

Thus
$$
\sigma^1|_N = \frac{q_w}{C(\cdot, w)} \sigma^2|_{N} \ll \frac{q_w}{C(\cdot, w)} \sigma^2,
$$
and in particular $\sigma^1|_N \ll \sigma^2$. Since $q_w \in H^2(\sigma^2)$ and $C(\cdot, w)^{-1}$ is bounded, we have that $d \sigma^1|_N /d \sigma^2 \in L^2(\sigma^2)$, which finishes the proof. \qedhere

\end{proof}

Finally, we will prove the second implication for the degenerate parts of pluriharmonic measures on $\bT^2$. The proof relies on the decompositions from Section \ref{Section: DegenerateParts}, and thus we restrict our attention to pluriharmonic measures on $\bT^2$.

\begin{lemma} \label{DegenerateCaseForMainTheorem}
Let $w \in \bD^2$ be fixed, let $\sigma^1, \sigma^2 \in$ RP($\bT^2)$ and let $E \subset \bT$ be a set of Lebesgue measure zero.

Then if $(C(\cdot, w) \sigma^1 - q_w \sigma^2) = \nu \in A(\bD^n)^\perp$ for some $q_w \in H^2(\sigma^2)$, we have that $\sigma^1|_{E\times \bT} \ll \sigma^2|_{E\times \bT}$ and $d \sigma^2|_{E\times \bT} / d \sigma^1|_{E\times \bT} \in L^2(\sigma^2)$. The same conclusion holds if $E \times \bT$ is replaced by $\bT \times E$. \qedhere

\end{lemma}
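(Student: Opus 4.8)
The plan is to exploit the decomposition \eqref{DecompRP} of the pluriharmonic measures $\sigma^1$ and $\sigma^2$ into non-degenerate parts plus degenerate parts $\tau_1 \times dm$ and $dm \times \tau_2$, and to combine it with Ahern's theorem (Theorem \ref{Ahern}) to identify what the restriction $\nu|_{E \times \bT}$ of the annihilating measure $\nu$ looks like. The key observation is that $\nu = C(\cdot,w)\sigma^1 - q_w \sigma^2$ lies in $A(\bD^2)^\perp$, hence (by the relation between $A(\bD^2)^\perp$ and $\mathrm{RP}(\bT^2)$ via the change of variables $T$, and by Cole's decomposition, Theorem \ref{Cole}) the part of $\nu$ carried by $E \times \bT$ — for $E$ of Lebesgue measure zero in the first coordinate — is of the form $h(\zeta_1,\zeta_2)\, d\tau(\zeta_1)\, dm(\zeta_2)$ with $h(\zeta_1,\cdot) \in H^1_0(\bT)$ for a.e.\ $\zeta_1$. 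In particular, on slices $\{\zeta_1\} \times \bT$ this degenerate part of $\nu$ has vanishing Cauchy transform in the second variable and mass zero.

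First I would restrict attention to $E \times \bT$ and apply Ahern's theorem to each of $\sigma^1, \sigma^2$: since $m(E) = 0$, we get $\sigma^j|_{E\times\bT} = (\pi_1\sigma^j)|_E \times m$, where $\pi_1\sigma^j$ is the projection onto the first torus and (by the general fact quoted in Section \ref{Section: DegenerateParts}) $(\pi_1\sigma^j)|_E \in \mathrm{RP}(\bT)$, i.e.\ is a real measure on $\bT$ carried by the null set $E$. Next I would restrict the identity $C(\cdot,w)\sigma^1 - q_w \sigma^2 = \nu$ to $E \times \bT$. The right-hand side becomes the degenerate-type piece described above; the crucial point to extract is that, slice by slice in $\zeta_1 \in E$, the measure $C(\cdot,w)\,\sigma^1|_{E\times\bT} - q_w\,\sigma^2|_{E\times\bT}$ restricted to $\{\zeta_1\}\times\bT$ annihilates $A(\bD)$ in the $\zeta_2$ variable. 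Since $\sigma^1|_{E\times\bT}$ and $\sigma^2|_{E\times\bT}$ are products with $m$ in the second variable, on each slice this reduces — after dividing out by the nonvanishing kernel factor $C(\cdot,w)$, which factors as a product over the two variables — to an identity of the form $f_1(\zeta_1)\,dm(\zeta_2)$ against an annihilator, forcing, via the one-variable F.\ and M.\ Riesz theorem (the degenerate part $\nu_1$ being absolutely continuous in $\zeta_2$), the relation $\sigma^1|_{E\times\bT} \ll \sigma^2|_{E\times\bT}$ with the Radon–Nikodym derivative governed by $q_w/C(\cdot,w)$.

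Concretely, I expect the chain to be: writing $C(z,w) = \frac{1}{1-z_1\overline{w_1}}\cdot\frac{1}{1-z_2\overline{w_2}}$, on $E \times \bT$ the identity $\nu = C(\cdot,w)\sigma^1 - q_w\sigma^2$ together with Ahern forces $\sigma^1|_{E\times\bT}$ to be absolutely continuous with respect to $\sigma^2|_{E\times\bT}$ — because any part of $(\pi_1\sigma^1)|_E$ singular to $(\pi_1\sigma^2)|_E$ would produce, after multiplication by $dm(\zeta_2)$ and by the analytic factor, a nonzero singular pluriharmonic summand in $\nu$ that cannot be cancelled and is not of the required $H^1_0$-in-$\zeta_2$ form. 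Then $d\sigma^1|_{E\times\bT}/d\sigma^2|_{E\times\bT} = \dfrac{q_w}{C(\cdot,w)}$ up to the $\zeta_2$-independent rescaling, and since $q_w \in H^2(\sigma^2)$ and $C(\cdot,w)^{-1}$ is bounded on $\bT^2$, the derivative lies in $L^2(\sigma^2)$, exactly as in the proof of Lemma \ref{RestrictNullset}. The symmetric statement for $\bT \times E$ follows by interchanging the two coordinates, using $\nu_2$ in place of $\nu_1$ in Cole's decomposition.

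The main obstacle I anticipate is the slicing/disintegration step: making rigorous the passage from the global identity $\nu \in A(\bD^2)^\perp$ to a genuine one-variable annihilation statement on almost every slice $\{\zeta_1\}\times\bT$, and in particular justifying that the $\zeta_1$-disintegration of $C(\cdot,w)\sigma^1|_{E\times\bT} - q_w\sigma^2|_{E\times\bT}$ agrees slicewise with the degenerate piece $\nu_1$ of Cole's decomposition (so that one may invoke the one-variable F.\ and M.\ Riesz theorem). This requires carefully tracking how the product structure of $\sigma^j|_{E\times\bT} = (\pi_1\sigma^j)|_E \times m$ interacts with the non-product function $q_w$, and confirming that $q_w$, being in $H^2(\sigma^2)$, restricts sensibly to the slices. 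The remaining steps — dividing out the bounded nonvanishing Cauchy kernel, reading off the Radon–Nikodym derivative, and checking $L^2(\sigma^2)$-membership — are then routine, mirroring Lemma \ref{RestrictNullset}.
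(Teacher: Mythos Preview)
Your proposal follows essentially the same architecture as the paper's proof: apply Ahern's theorem to write $\sigma^j|_{E\times\bT}=\tau_j\times m$, restrict $\nu$ to $E\times\bT$, and use Cole's decomposition to identify this restriction as a measure of the form $h\,d\tau\times dm$ with $h(\zeta_1,\cdot)\in H^1_0(\bT)$. Your argument for absolute continuity is exactly the paper's: if $\tau_2(S)=0$ but $\tau_1(S)\neq 0$, then on $S\times\bT$ the identity reads $C(\cdot,w)\,\tau_1|_S\times dm = h\,\tau_3|_S\times dm$, and since $(1-\zeta_2\overline{w_2})h(\zeta_1,\zeta_2)$ would have to be simultaneously constant in $\zeta_2$ and vanish at $\zeta_2=0$, this forces $\tau_1|_S=0$, a contradiction.

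Where your proposal is imprecise is the $L^2$ step. You write that $d\sigma^1|_{E\times\bT}/d\sigma^2|_{E\times\bT} = q_w/C(\cdot,w)$ ``up to the $\zeta_2$-independent rescaling'', and then quote the bounded-inverse-kernel argument from Lemma~\ref{RestrictNullset}. But the Radon--Nikodym derivative here is a function $f(\zeta_1)$ of the first variable only, and on $E\times\bT$ the identity is
\[
\bigl(C(\cdot,w)f(\zeta_1)-q_w(\zeta_1,\zeta_2)\bigr)\,\tau_2\times dm \;=\; \nu|_{E\times\bT},
\]
where the right side is \emph{not} zero. So you cannot simply solve for $f$ pointwise; the $H^1_0$-piece on the right must first be annihilated. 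The paper's device is to integrate out $\zeta_2$ (equivalently, evaluate the second-variable Cauchy transform at $0$): since each slice $h(\zeta_1,\cdot)\in H^1_0$ has mean zero, this kills the right-hand side and yields
\[
C((\zeta_1,0),w)\,f(\zeta_1)\;=\;q_w(\zeta_1,0),
\]
after which the estimate $\int_\bT|q_w(\zeta_1,0)|^2\,d\tau_2(\zeta_1)\le \|q_w\|_{L^2(\sigma^2)}^2$ (Jensen in the $\zeta_2$-integral) and boundedness of $C((\,\cdot\,,0),w)^{-1}$ give $f\in L^2(\tau_2)$. Your sketch is missing precisely this averaging step, and the slicing obstacle you flag is resolved by it rather than by a one-variable F.\ and M.\ Riesz argument.
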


\begin{proof}
To begin with, by Ahern's theorem (Theorem \ref{Ahern}), $\sigma^1|_{E \times \bT} = \tau_1 \times dm$ and $\sigma^2|_{E \times \bT} = \tau_2 \times dm$  for some singular measures $\tau_1$ and $\tau_2$ on $\bT$. We are done if we can show that $\tau_1 \ll \tau_2$ and $d \tau_2 / d \tau_2 \in L^2(\tau_2)$.

We begin by showing that $\tau_1 \ll \tau_2$. If this was not the case, there would, by definition, be some set $S \subset \bT$ such that $\tau_2(S)=0$, but $\tau_1(S) \neq 0$. By restricting to $S \times \bT$ and using Cole's decomposition (Theorem \ref{Cole}) on $\nu$, we see that  
$$
C(\cdot, w) \tau_1|_S \times dm = h(\cdot) \tau_3|_S \times dm,
$$
where $\tau_3$ is some singular measure on $\bT$ and $h(\zeta_1, \cdot)$ belongs to $H^1_0(\bT)$ for every fixed value of $\zeta_1 \in \bT$.

But this is impossible, since for each fixed $\zeta_1 \in \bT$, the function $h(\zeta_1, \zeta_2)(1-\zeta_2 \overline{w_2}) \in H^1_0(\bD)$ has to be constant as a function of $\zeta_2 \in \bT$, but it also has to vanish for $\zeta_2 = 0$. This can only happen if the function (and thus $\tau_1|_S$) is identically equal to $0$, which contradicts our assumption on $S$.

Now denote the Radon-Nikodym derivative $d \tau_2 / d \tau_1$ by $f$. It remains to show that $f \in L^2(\tau_2)$.

We have that
$$
(C((\zeta_1 , \zeta_2), w) f(\zeta_1) - q_w(\zeta_1, \zeta_2)) \tau_1 \times dm \in A(\bD^2)^\perp,
$$
so 
\begin{equation} \label{degenerateDerivativeInL2}
C((\zeta_1, 0), w) f(\zeta_1) - q_w(\zeta_1, 0)) = 0. 
\end{equation}
Note that $q_w(\zeta_1, 0)$ is well-defined, at least as a formal power series, since $q_w \in H^2(\sigma^2)$. Furthermore
$$
\int_{\bT} |q_w(\zeta_1, 0)|^2 d \tau_2(\zeta_1) = \int_{\bT} \left| \int_{\bT} q_w(\zeta_1, \zeta_2) dm(\zeta_2) \right|^2 d \tau_2(\zeta_1) \leq \sigma^2(\bT^2) \| q_w \|^2_{L^2(\sigma^2)} < \infty,
$$
and thus \eqref{degenerateDerivativeInL2} implies that $f \in L^2(\tau_2)$, as was to be shown. 

The proof for $\bT \times E$ is done in the same way. \qedhere

\end{proof}

By combining the above Lemmas, we obtain the main theorem of this paper.

\begin{thm} \label{MainTheorem}
Let $b_1, b_2: \bD^2 \ra \bD$ be non-constant holomorphic functions, and let $\sigma^1 = \sigma_\alpha[b_1]$ and $\sigma^2 = \sigma_\alpha[b_2]$, where $\alpha \in \bT$, be the corresponding Clark-Aleksandrov measures. As before, let $k_{b_1}( \cdot , w)$ denote the reproducing kernel of $\mathcal{H}(b_1)$. 

Let $\sigma^1 = \sigma^1_0 + \sigma^1_1 + \sigma^1_2$ be the decomposition of $\sigma_1$ as in \eqref{DecompRP}. Then if there is a set $N \subset \bT^2$ which is a null set of $A(\bD^2)^\perp$ that carries $\sigma^1_0$, then the following conditions are equivalent:

\begin{enumerate}
    \item $\sigma^1 \ll \sigma^2 \text{ and } \frac{d \sigma^1}{d \sigma^2} \in L^2(\sigma^2)$,
    \item $\frac{\alpha - b_2}{\alpha - b_1} k_{b_1}( \cdot , w) \in \mathcal{H}(b_2)$
    for all $w \in \bD^2$,
    \item $\frac{\alpha - b_2}{\alpha - b_1} k_{b_1}( \cdot , w) \in \mathcal{H}(b_2)$
    for some $w \in \bD^2$. 
    
\end{enumerate}

\end{thm}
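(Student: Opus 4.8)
The plan is to prove the cycle of implications $(1) \Rightarrow (2) \Rightarrow (3) \Rightarrow (1)$, with the first two implications essentially already in hand from the lemmas and the third being where the decomposition hypothesis on $\sigma^1_0$ does the work.

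First, $(1) \Rightarrow (2)$ is precisely Lemma \ref{FirstImplicationGeneral} applied with $n = 2$, so nothing new is needed there. The implication $(2) \Rightarrow (3)$ is trivial. The substance of the theorem is $(3) \Rightarrow (1)$, and here is how I would proceed. Assume $\frac{\alpha - b_2}{\alpha - b_1} k_{b_1}(\cdot, w) \in \mathcal{H}(b_2)$ for some fixed $w \in \bD^2$; without loss of generality take $\alpha = 1$. By Lemma \ref{Containment of function in de Branges Rovnyak condition} there is a $q_w \in H^2(\sigma^2)$ with $\nu := C(\cdot, w)\sigma^1 - q_w \sigma^2 \in A(\bD^2)^\perp$. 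Now split $\sigma^1 = \sigma^1_0 + \sigma^1_1 + \sigma^1_2$ according to \eqref{DecompRP}, where $\sigma^1_1 = \tau_1 \times dm$ is carried by $E_1 \times \bT$ with $m(E_1) = 0$, and $\sigma^1_2 = dm \times \tau_2$ is carried by $\bT \times E_2$ with $m(E_2) = 0$, and $\sigma^1_0$ is carried by the null set $N$ of $A(\bD^2)^\perp$. I would handle the three pieces separately and then reassemble.

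For the piece $\sigma^1_0$: since $N$ is a null set for $A(\bD^2)^\perp$ and $\nu \in A(\bD^2)^\perp$, we have $|\nu|(N) = 0$, hence $C(\cdot,w)\sigma^1|_N = q_w \sigma^2|_N$; exactly as in the proof of Lemma \ref{RestrictNullset} this gives $\sigma^1|_N \ll \sigma^2$ with $d\sigma^1|_N/d\sigma^2 = q_w/C(\cdot,w) \in L^2(\sigma^2)$ (using that $C(\cdot,w)^{-1}$ is bounded on $\bT^2$). Since $\sigma^1_0$ is carried by $N$, in particular $\sigma^1_0 \ll \sigma^2$ with derivative in $L^2(\sigma^2)$; but I will want the slightly stronger statement for $\sigma^1|_N$, and note $\sigma^1|_N \geq \sigma^1_0$ may fail to be an equality if $N$ also meets the degenerate parts, so I should either enlarge $N$ or argue that the overlap is harmless — I will address this below. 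For the degenerate pieces $\sigma^1_1$ and $\sigma^1_2$: apply Lemma \ref{DegenerateCaseForMainTheorem} with $E = E_1$ (resp. $E = E_2$), which from the same relation $\nu \in A(\bD^2)^\perp$ yields $\sigma^1|_{E_1 \times \bT} \ll \sigma^2|_{E_1 \times \bT} \ll \sigma^2$ with derivative in $L^2(\sigma^2)$, and likewise for $\bT \times E_2$. Here I should double-check the direction of absolute continuity and the $L^2$ claim in the statement of Lemma \ref{DegenerateCaseForMainTheorem} and make sure I am invoking it with $\sigma^1,\sigma^2$ in the roles for which it was proved.

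The reassembly step is where care is required, and I expect it to be the main obstacle. The three carriers $N$, $E_1 \times \bT$, $\bT \times E_2$ need not be disjoint, and $\sigma^1$ is the sum of three mutually singular measures supported on (respectively) these sets, so I want to say $\sigma^1$ is carried by $N \cup (E_1\times\bT) \cup (\bT\times E_2)$ and that on each of the three pieces we have the desired absolute continuity and $L^2$ bound relative to $\sigma^2$. Since $\sigma^1_0, \sigma^1_1, \sigma^1_2$ are mutually singular, I can pick disjoint carriers $N' \subset N$, $F_1 \subset E_1\times\bT$, $F_2\subset \bT\times E_2$ for them; restricting the conclusions of the previous paragraph to these subsets still gives $\sigma^1_j \ll \sigma^2$ with $d\sigma^1_j/d\sigma^2 \in L^2(\sigma^2)$ for $j = 0,1,2$. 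Summing, $\sigma^1 \ll \sigma^2$ and $\frac{d\sigma^1}{d\sigma^2} = \sum_{j=0}^2 \frac{d\sigma^1_j}{d\sigma^2} \in L^2(\sigma^2)$ as a finite sum of $L^2(\sigma^2)$ functions, which is exactly $(1)$. The only genuinely delicate point is making sure Lemma \ref{DegenerateCaseForMainTheorem} really applies to $\sigma^1_1$ and $\sigma^1_2$ individually rather than only to $\sigma^1|_{E_1\times\bT}$ as a whole — but since $\sigma^1_1 = \sigma^1|_{E_1\times\bT}$ up to a $\sigma^2$-null and $\sigma^1$-null adjustment coming from the other pieces (which live on $m$-null slices in the other variable), this causes no trouble, and I would spell that out in one line. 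This completes the cycle and hence the proof.
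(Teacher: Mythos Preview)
Your proposal is correct and follows essentially the same route as the paper's proof: the cycle $(1)\Rightarrow(2)\Rightarrow(3)\Rightarrow(1)$ with Lemma~\ref{FirstImplicationGeneral} for the first step, Lemma~\ref{RestrictNullset} for the piece $\sigma^1_0$, and Lemma~\ref{Containment of function in de Branges Rovnyak condition} together with Lemma~\ref{DegenerateCaseForMainTheorem} for the degenerate pieces $\sigma^1_1,\sigma^1_2$. Your extra care with disjoint carriers in the reassembly is fine but not strictly necessary, since $\sigma^1_j \leq \sigma^1|_{\text{(carrier of }\sigma^1_j)}$ as positive measures already yields $d\sigma^1_j/d\sigma^2 \in L^2(\sigma^2)$ once the larger restriction has that property.
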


\begin{proof}

$(i) \Ra (ii)$ is just Lemma \ref{FirstImplicationGeneral} for $n=2$.

$(ii) \Ra (iii)$ is trivial.

$(iii) \Ra (i)$. We are done if we can show that $\sigma^1_j \ll \sigma^2$ and $d \sigma^1_j / d \sigma^2 \in L^2(\sigma^2)$ for $j=0,1,2$. 

Applying Lemma \ref{Containment of function in de Branges Rovnyak condition} and Lemma \ref{DegenerateCaseForMainTheorem} gives the statement for $j=1$ and  $2$.

Since $N$ is a carrier for $\sigma^1_0$ by assumption, we have that $\sigma^1_0 = \sigma^1_0|_N$, and thus Lemma \ref{RestrictNullset} proves the statement for $j=0$, which finishes the proof.

\end{proof}

\begin{remark}
    If \emph{every} real representing measures for $(0,0)$ is carried by some set $S$ such that $\{(z_1, \overline{z_2}): (z_1, z_2) \in S \}$ is a null set for $A(\bD^2)^\perp$, then Theorem \ref{MainTheorem} will be applicable for every non-constant function $b_1: \bD^2 \ra \bD$. 
\end{remark}

\section{Applications}

In order to apply Theorem \ref{MainTheorem}, we must determine what functions $b_1$ have Clark-Aleksandrov measures (corresponding to some parameter $\alpha$) whose non-degenerate part is carried by some null set of $A(\bD^n)^\perp$. The following theorem relates compact null sets to zero sets of the polydisc algebra, which will be used to conclude that Clark-Aleksandrov measures corresponding to a large class of inner functions are indeed carried by such sets. Here, a compact subset $K \subset \bT^n$ is called a \emph{zero set of $A(\bD^n)$} if there is some $f \in A(\bD^n)$ such that $f \equiv 0$ on $K$, and $f \neq 0$ on $\overline{\bD^n} \setminus K$. 

\begin{thm}[Part of Theorem $6.1.2.$ in \cite{Rudin}] \label{NullsetTheorem6.1.2.} \label{6.1.2. Rudin}
Let $K$ be a compact subset of $\bT^n$. Then $K$ is a zero set of $A(\bD^n)$ if and only if $|\mu|(K) = 0$ for every $\mu \in A(\bD^n)^\perp$. 
\end{thm}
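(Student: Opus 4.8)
The plan is to prove that the following three properties of the compact set $K\subset\bT^n$ are equivalent, and then to read off the stated equivalence: (a) $K$ is a zero set of $A(\bD^n)$; (b) $K$ is a \emph{peak set}, i.e.\ there is $p\in A(\bD^n)$ with $p\equiv 1$ on $K$ and $|p|<1$ on $\overline{\bD^n}\setminus K$; (c) $|\mu|(K)=0$ for every $\mu\in A(\bD^n)^\perp$. The implication (b)$\Rightarrow$(a) is immediate, since then $1-p\in A(\bD^n)$ vanishes exactly on $K$; and (a)$\Rightarrow$(b) is a standard construction: after normalizing $\|f\|_{\overline{\bD^n}}=\|f\|_{\bT^n}\le 1$ via the maximum modulus principle, one manufactures a peak function from $f$ by composing a suitable auxiliary holomorphic function with $\exp$ (see \cite{Rudin}). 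So the content is the equivalence of (b) and (c).

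For (c)$\Rightarrow$(b): if $|\mu|(K)=0$ for all $\mu\in A(\bD^n)^\perp$, then any measure carried by $K$ that annihilates the image of $A(\bD^n)$ in $C(K)$ is, regarded as a measure on $\bT^n$, an element of $A(\bD^n)^\perp$ carried by $K$, hence zero; so by Hahn--Banach the restriction algebra $A(\bD^n)|_K$ is dense in $C(K)$, and the Bishop--Glicksberg peak--interpolation theorem applies. Feeding the constant function $1\in C(K)$ into it produces $p\in A(\bD^n)$ with $p|_K=1$ and $|p|<1$ on $\overline{\bD^n}\setminus K$, which is (b). This direction is soft.

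The substantive direction is (b)$\Rightarrow$(c). Fix a peak function $p$, let $\mu\in A(\bD^n)^\perp$, and take $n=2$, using Cole's decomposition (Theorem~\ref{Cole}) $\mu=\nu_0+\nu_1+\nu_2$; by mutual singularity it suffices to show $|\nu_j|(K)=0$ for $j=0,1,2$. Since $0\notin K$ one has $|p(0)|<1$, while for any representing measure $\lambda$ for the origin the functions $p^k\in A(\bD^2)$ satisfy $\int p^k\,d\lambda=p(0)^k$ and $p^k\to\mathbbm{1}_K$ boundedly and pointwise on $\bT^2$; dominated convergence then gives $\lambda(K)=\lim_k p(0)^k=0$, and since $\nu_0\ll\lambda$ we get $|\nu_0|(K)=0$. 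For the degenerate parts, restricting $p$ to a slice $\{\zeta_1\}\times\overline{\bD}$ shows that each horizontal section $K_{\zeta_1}:=\{\zeta_2:(\zeta_1,\zeta_2)\in K\}$ is a peak set for $A(\bD)$, hence has $m$-measure zero; as $\nu_1=h(\zeta_1,\zeta_2)\,d\tau(\zeta_1)\,dm(\zeta_2)$ is absolutely continuous in $\zeta_2$, integrating over $K_{\zeta_1}$ yields $|\nu_1|(K)=0$, and symmetrically $|\nu_2|(K)=0$.

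I expect (b)$\Rightarrow$(c) to be the main obstacle. It is precisely the step where, in one variable, the F.\ and M.\ Riesz theorem would finish the argument at once (an annihilating measure is then absolutely continuous with respect to $m$, which assigns $K$ measure zero), and where, on $\bT^n$, one instead has to invoke the structural description of $A(\bD^n)^\perp$; for $n>2$ Theorem~\ref{Cole} must be replaced by its higher-dimensional counterpart, which is the route followed in \cite{Rudin}.
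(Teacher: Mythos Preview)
The paper does not prove this theorem; it is quoted from \cite{Rudin} without argument, so there is no in-paper proof to compare against. Judged on its own, your sketch is essentially correct for $n=2$: the equivalence (a)$\Leftrightarrow$(b) is standard, (c)$\Rightarrow$(b) via Bishop's peak--interpolation theorem is the right mechanism (though note that density of $A(\bD^n)|_K$ in $C(K)$ is not by itself what produces the peak function---you should invoke Bishop's theorem directly in the form ``$A(\bD^n)^\perp$-null compact sets are peak--interpolation sets'' and then take $g\equiv 1$), and your case analysis of $\nu_0,\nu_1,\nu_2$ for (b)$\Rightarrow$(c) is valid as written.

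The gap is that the theorem is stated for arbitrary $n$, while your argument for the substantive direction (b)$\Rightarrow$(c) is carried out only for $n=2$ via Cole's decomposition (Theorem~\ref{Cole}). Your closing remark that for $n>2$ one replaces Theorem~\ref{Cole} by ``its higher-dimensional counterpart, which is the route followed in \cite{Rudin}'' is misleading on two counts: higher-dimensional analogues of Cole's decomposition are considerably more delicate and are not a drop-in replacement, and in any case Rudin's book (1969) predates Cole's result, so his proof necessarily proceeds by other means---essentially by slicing and an induction on $n$, with the one-variable F.\ and M.\ Riesz theorem at the base. As a proof of the full $n$-dimensional statement your argument is therefore incomplete; restricted to $n=2$ it is a legitimate alternative route that trades Rudin's induction for the structural input of Theorem~\ref{Cole}.
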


An interesting consequence of this theorem is that every compact subset of a zero set of $A(\bD^n)$ is also a zero set of $A(\bD^n)$.

An obvious candidate for the carrier $S_{\sigma^1}$ is the entire support of the measure $\sigma^1$. By Theorem \ref{6.1.2. Rudin}, the support satisfies the assumptions on $S_{\sigma^1}$ if and only if it is a level set for some function in $A(\bD^n)$.

If $I$ is an inner function, then the corresponding Clark-Aleksandrov measure $\sigma_\alpha[I]$ has support contained in the set 
$$
\overline{\{ \zeta \in \bT^n: \lim_{r \ra 1^-} I(r \zeta) = \alpha \}}
$$
(see Lemma $2.1.$ in \cite{Clark2} or Proposition $2.1.$ in \cite{Nell}). Thus the assumption of Theorem \ref{MainTheorem} is satisfied whenever $b_1$ is an inner function in $A(\bD^2)$ (i.e. when $b_1$ is a rational inner function with no singularities on the boundary, see Theorem $5.2.5.$ $(b)$ in \cite{Rudin}). In fact, the assumption  will always be satisfied when $b_1$ is a rational inner function on $\bD^2$.

By Theorem $5.2.5.$ of \cite{Rudin}, every rational inner function is of the form 
$$
e^{iv} z^d \frac{\tilde{p}(z)}{p(z)}, \quad z \in \bD^n,
$$
where $v \in [0, 2 \pi)$ $,d \in \bN^n$, $p(z)$ is a polynomial of polydegree $(d_1, \ldots, d_n)$ with no zeros in $\bD^n$, and $\tilde{p}$ is the reflection of $p$, defined by
\begin{equation} \label{p tilde}
\tilde{p}(z) := z_1^{d_1} \cdots z_n^{d_n} \overline{p \left(\frac{1}{\overline{z_1}}, \ldots , \frac{1}{\overline{z_n}} \right)}.
\end{equation}
Note that, unlike finite Blaschke products, rational inner functions in several variables may have singularities on the boundary (see Example \ref{Ex: anti-diagonal} below for an example of such a function). 

For most purposes it suffices to analyze rational inner functions of the form $\tilde{p}/p$. 
If $b_1$ is a rational inner function of the form $\tilde{p}/p$, then
\begin{equation} \label{ClarkSupportRIF}
\overline{\{ \zeta \in \bT^n: \lim_{r \ra 1^-} b_1(r \zeta) = \alpha \}} = \{ \zeta \in \bT^n: \tilde{p}(\zeta) - \alpha p(\zeta) = 0 \},
\end{equation}
(see Theorem $2.6.$ of \cite{AJM}). If $\tilde{p} - \alpha p$ does not vanish inside $\bD^n$, then we can apply Theorem \ref{NullsetTheorem6.1.2.} to conclude that the corresponding Clark measure is carried by a null set for $\A(\bD^n)^\perp$, and thus Lemma \ref{RestrictNullset} is applicable.

Furthermore if $\phi = \tilde{p}/p$ is a RIF on $\bD^2$, then $\tilde{p}(z) - \alpha p(z)$ can only vanish at a point $(z'_1, z'_2) \in \bD^2$ if either $(z_1 - z'_1)$ or $(z_2 - z'_2)$ is a factor of $\tilde{p}(z) - \alpha p(z)$ (see the proof of Lemma $4.1.$ in \cite{Clark2} for details). Clearly there can only be finitely many such factors. Thus the set \eqref{ClarkSupportRIF} can be written as a union of finitely many coordinate parallel lines (corresponding to the degenerate part of the Clark measure), and one component that is the zero set of some polynomial that does not vanish inside $\bD^2$ (something similar likely holds for RIFs on $\bD^n$ with $n > 2$ as well). 

Thus the assumptions of Theorem \ref{MainTheorem} are satisfied for all RIFs $\phi = \tilde{p}/p$ on $\D^2$ (even when $p$ vanishes on the boundary). We formulate this as a theorem.

\begin{thm} \label{MainResultRIFsD^2}
Theorem \ref{MainTheorem} is applicable whenever $b_1 \in H^2(\bD^2)$ is a rational inner function of the form $b_1 = \tilde{p}/p$, where $p(z) \neq 0$ in $\bD^2$. 
\end{thm}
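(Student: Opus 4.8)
The plan is to reduce Theorem \ref{MainResultRIFsD^2} to an application of Theorem \ref{MainTheorem}, whose hypothesis demands a single Borel set $N \subset \bT^2$ that is a null set for $A(\bD^2)^\perp$ and carries the non-degenerate part $\sigma^1_0$ of the Clark--Aleksandrov measure $\sigma^1 = \sigma_\alpha[b_1]$. Since $b_1 = \tilde p / p$ is a rational inner function, Lemma 2.1 of \cite{Clark2} (or Proposition 2.1 of \cite{Nell}) tells us that $\sigma_\alpha[b_1]$ is supported in the closure of $\{\zeta \in \bT^2 : \lim_{r\to 1^-} b_1(r\zeta) = \alpha\}$, which by \eqref{ClarkSupportRIF} (Theorem 2.6 of \cite{AJM}) equals the algebraic set $Z_\alpha := \{\zeta \in \bT^2 : \tilde p(\zeta) - \alpha p(\zeta) = 0\}$. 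So the whole measure $\sigma^1$, and in particular $\sigma^1_0$, lives on $Z_\alpha$.

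The key structural step is to decompose $Z_\alpha$ according to where the polynomial $q_\alpha := \tilde p - \alpha p$ vanishes relative to $\bD^2$. Following the remark after Theorem \ref{6.1.2. Rudin} and the discussion around \eqref{ClarkSupportRIF}, write $q_\alpha = q' \cdot q''$, where $q''$ collects all irreducible factors of $q_\alpha$ that vanish at some point of $\bD^2$ and $q'$ collects the rest; by the argument borrowed from the proof of Lemma 4.1 of \cite{Clark2}, any irreducible factor of $q_\alpha$ vanishing inside $\bD^2$ must be of the form $z_1 - a$ or $z_2 - a$ with $a \in \bT$ (the zero set of $q_\alpha$ on $\bT^2$ being totally real of dimension $\le 1$, a factor touching the open bidisc has to be a single coordinate line), so $q''$ is a finite product of such linear factors and $\{q'' = 0\} \cap \bT^2$ is a finite union of coordinate-parallel circles $\{a\} \times \bT$ and $\bT \times \{a\}$. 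Set $N := \{\zeta \in \bT^2 : q'(\zeta) = 0\}$. I claim $N$ is a null set for $A(\bD^2)^\perp$: $q'$ has no zeros in $\bD^2$, hence $q'(z) - q'(\zeta')$ type arguments show $N$ is a compact zero set of $A(\bD^2)$ in the sense defined before Theorem \ref{6.1.2. Rudin} — more carefully, $q'$ itself need not vanish off $N$ on all of $\overline{\bD^2}$, so one should instead invoke that $N$, being a compact subset of the zero set of $q'$ and $q'$ having no zeros in the open bidisc, is contained in a genuine $A(\bD^2)$ zero set (e.g. via Theorem 6.1.2 of \cite{Rudin} applied after checking $|\mu|(N)=0$ directly, or by the observation after Theorem \ref{6.1.2. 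Rudin} that subsets of zero sets are zero sets). Either way, Theorem \ref{NullsetTheorem6.1.2.} gives $|\mu|(N) = 0$ for all $\mu \in A(\bD^2)^\perp$.

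It then remains to check that $N$ carries $\sigma^1_0$. Since $\supp{\sigma^1} \subset Z_\alpha = N \cup \big(\{q''=0\}\cap\bT^2\big)$ and the second set is a finite union of coordinate circles of the form $\{a\}\times\bT$ or $\bT\times\{a\}$, the part of $\sigma^1$ not carried by $N$ is carried by finitely many sets $E \times \bT$ (or $\bT \times E$) with $E$ finite, hence of Lebesgue measure zero; by Ahern's theorem (Theorem \ref{Ahern}) each such part is of the form $\tau \times m$ (or $m \times \tau$), i.e. a degenerate pluriharmonic measure. In the decomposition \eqref{DecompRP}, $\mu_1 = \tau_1 \times m$ and $\mu_2 = m \times \tau_2$ absorb exactly these degenerate pieces, and $\sigma^1_0$ is by construction the part carrying no degenerate component; since $\sigma^1 = \sigma^1_0 + \sigma^1_1 + \sigma^1_2$ with the three mutually singular and $\sigma^1$ living on $N$ together with finitely many coordinate circles, mutual singularity forces $\sigma^1_0$ to be carried by $N$. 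Feeding this $N$ into Theorem \ref{MainTheorem} completes the proof.

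The main obstacle I anticipate is the bookkeeping in the previous paragraph: making rigorous the claim that $\sigma^1_0$ — defined abstractly through Cole's decomposition pushed through the change of variables $T$ — coincides with "the part of $\sigma^1$ supported on $N$", rather than merely being dominated by it. One has to argue that any degenerate component hiding inside $\sigma^1_0$ would have to be supported on the coordinate circles in $Z_\alpha$, contradicting the definition of $\sigma^1_0$ as the summand with no such component; this uses the uniqueness and mutual singularity built into \eqref{DecompRP} together with Theorem \ref{Ahern}. A secondary, more routine point is confirming that $N$ genuinely is (contained in) a zero set of $A(\bD^2)$ even when $p$ has boundary singularities — here one leans on the fact that $q' = (\tilde p - \alpha p)/q''$ is a polynomial with no zeros in $\bD^2$ and on Theorem \ref{6.1.2. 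Rudin}, so no deep new input is needed.
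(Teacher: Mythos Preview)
Your proposal is correct and follows essentially the same argument as the paper, which is given in the paragraphs immediately preceding the theorem rather than in a formal proof environment: locate the support of $\sigma^1$ inside the algebraic set $Z_\alpha=\{\tilde p-\alpha p=0\}\cap\bT^2$ via \eqref{ClarkSupportRIF}, split off the finitely many coordinate-parallel factors using the argument from Lemma~4.1 of \cite{Clark2}, and observe that the remaining component is the $\bT^2$-zero set of a polynomial with no zeros in $\bD^2$, hence a null set for $A(\bD^2)^\perp$ by Theorem~\ref{6.1.2. Rudin}. The paper is actually more casual than you are about the two points you flag --- it does not spell out why $\sigma^1_0$ (as opposed to all of $\sigma^1$) is carried by $N$, nor does it address whether $q'$ might vanish on $\partial\bD^2\setminus\bT^2$ --- so your additional bookkeeping with Ahern's theorem and mutual singularity in \eqref{DecompRP} is a genuine improvement in rigor rather than a deviation in strategy.
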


\begin{example} \label{Ex: anti-diagonal}
Let $b_2 = z_1 z_2$. By Example $4.2.$ in \cite{Doub}, $\sigma_\alpha[b_2]$ is Lebesgue measure on the line $\{(e^{iv}, \alpha e^{-iv}) \in \bT^2: v \in [0, 2 \pi) \}$. Now let $\sigma^2 = \sigma_1[b_2]$, and let $b_1: \bD^2 \ra \bD$ with $b_1(0,0)=0$ be another holomorphic function with corresponding Clark-Aleksandrov measure $\sigma^1 := \sigma_1[b_1]$. Then by applying Theorem \ref{MainTheorem} for $w=0$, we see that $\sigma^1 \ll \sigma^2$ and $d \sigma^1 / d \sigma^2 \in L^2(d \sigma^2)$ (i.e. $\sigma^1$ has support on the anti-diagonal of $\bT^2$ satisfying the $L^2$ requirement) if and only if

\begin{equation} \label{Containment H(b) example 1}
\frac{1 - z_1 z_2}{1 - b_1(z_1, z_2)} \in \mathcal{H}(z_1 z_2).
\end{equation}

For example, if $b_1$ is the rational inner function
$$
b_1(z_1, z_2) := \phi(z_1, z_2) = \frac{2 z_1 z_2 - z_1 - z_2}{2 - z_1 - z_2},
$$
then
\begin{equation*}
\frac{1 - z_1 z_2}{1 - \phi(z_1, z_2)} = \frac{2 - z_1 - z_2 }{2}.
\end{equation*}
Since $\mathcal{H}(z_1 z_2) = H^2 \ominus z_1 z_2 H^2$ consists of all functions of the form $c + f(z_1) + g(z_2),$ where $f,g \in H^2(\bD)$ and $f(0) = g(0) = 0$, the containment \eqref{Containment H(b) example 1} is satisfied.

It may be interesting to note that another application of Theorem \ref{MainTheorem} shows that
\begin{multline*}
\frac{1 - z_1 z_2}{1 - \phi(z_1, z_2)} \frac{(1-\phi(z) \overline{\phi(w)})}{(1- z_1 \overline{w_1})(1- z_2 \overline{w_2})} = \frac{2 - z_1 - z_2 }{2}  \frac{(1- \frac{2 z_1 z_2 - z_1 - z_2}{2 - z_1 - z_2} \overline{\phi(w)})}{(1- z_1 \overline{w_1})(1- z_2 \overline{w_2})} \\
\frac{((2-z_1-z_2) - (2 z_1 z_2 - z_1 - z_2) \overline{\phi(w)})}{2 (1- z_1 \overline{w_1})(1- z_2 \overline{w_2})} \perp z_1 z_2 H^2,
\end{multline*}
for \emph{all} $w \in \bD^2$, which is not as obvious.

That $\sigma^1 \ll \sigma^2$ and $d \sigma^1 / d \sigma^2 \in L^2(\sigma^2)$ can also be verified in more direct ways. As mentioned earlier, the measure $\sigma^1$ is supported on the set
$$
\{ (z_1, z_2) \in \bT^2: (2 z_1 z-2 - z_1 - z_2) = 2 - z_1 - z_2 \iff z_1 z_2 = 1 \},
$$
and one can in fact calculate the density with respect to $\sigma^2$ directly. In Example $5.1.$ in \cite{Clark1}, it is shown that $d \sigma^1(\zeta) = |1-\zeta|^2 d m(\zeta)$, where $d m (\zeta)$ denotes normalized Lebesgue measure on the line $\{ (\zeta, \overline{\zeta}) \in \bT^2: \zeta \in \bT \}$ (i.e. $d m(\zeta) = d \sigma^2$). Thus $d \sigma^1 / d \sigma^2 = |1-\zeta|^2$, which is contained in $L^2(d \sigma^2)$ as expected.

However, $d \sigma^2 / d \sigma^1 = |1-\zeta|^{-2}$ does \emph{not} lie in $L^2(d \sigma^1)$, so the corresponding containment from Theorem \ref{MainTheorem} does not hold. That is
$$
\frac{2}{2-z_1 - z_2} \not\in H^2 \ominus \phi H^2.
$$
This is certainly true, since $2/(2-z_1-z_2)$ is not even contained in $H^2(\D^2)$. 

We can also go back and try to determine for what holomorphic functions $b_1: \bD^2 \mapsto \bD$ the containment \eqref{Containment H(b) example 1} does hold. As mentioned before, $\mathcal{H}(z_1 z_2)$ consists of all holomorphic functions of the form $c + f(z_1) + g(z_2)$ where $f,g \in H^2(\bD)$ and $f(0) = g(0) = 0$. Thus, we are looking for holomorphic functions on $\bD^2$ that map into $\bD$ that are of the form
\begin{equation} \label{first b_1}
b_1(z_1, z_2) = 1- \frac{1 - z_1 z_2}{c + f(z_1) + g(z_2)} = \frac{(c-1) + f(z_1) + g(z_2)  + z_1 z_2}{c + f(z_1) + g(z_2)}.
\end{equation}
Note that $c \neq 0$ since we want $b_1$ to be holomorphic in $\bD^2$.

If we limit our attention to finding rational inner functions $b_1 = \tilde{p}/p$, where $p$ is a polynomial of bidegree $(m,n)$ that does not vanish in $\bD^2$, and $\tilde{p}$ is given by \eqref{p tilde}, then we can explicitly describe all possible functions $b_1$. 

The numerator on the right hand side of \eqref{first b_1} contains no mixed term of bidegree higher than $(1,1)$, and since $c \neq 0$, this means that $m,n \leq 1$. In fact, $m=n=1$, since if $m$ or $n$ is zero, then $\tilde{p}$ will not contain the term $z_1 z_2$. 

This means that $c+ f(z_1) + g(z_2) = c + a z_1 + b z_2$, so 
$$
\tilde{p} = c z_1 z_2 + a z_2 + b z_1 =  (c-1) + a z_1 + b z_2 + z_1 z_2, 
$$
so $c= 1$ and $a=b$. That is, the possible functions are exactly the functions of the form $b_1(z) = (z_1 z_2 + a z_1 + a z_2)/(1 + a z_1 + a z_2)$.
Finally, since $b_1$ is holomorphic on $\bD^2$, the denominator cannot vanish there, and so $-1/a$ is not contained in the open disc with radius $2$. By setting $d=-1/a$, we see that $b_1$ is of the form
\begin{equation} \label{favRIFs}
b_1(z) = \frac{d z_1 z_2 - z_1 - z_2}{d - z_1 - z_2},
\end{equation}
where $|d| > 2$. Note that if we let $d \ra \infty$, we obtain the function $b_2 = z_1 z_2$. 

As we saw earlier
$$
\frac{1 - b_1(z_1, z_2)}{1- z_1 z_2} \not\in H^2 \ominus b_1 H^2
$$
for $b_1$ as in \eqref{favRIFs} with $d=2$, since in this case $(1- b_1(z_1, z_2))(1- z_1 z_2)$ does not even lie in $H^2$. However, for all $d$ such that $b_1$ from \eqref{favRIFs} belongs to $H^2$, the orthogonality requirement also holds. To see this, note that
$$
\frac{1- b_1(z_1, z_2)}{1- z_1 z_2} = \frac{d}{d - z_1 - z_2},
$$
so for all $f \in H^2$
$$
\int_{\bT^2} \frac{1}{\overline{d - z_1 - z_2}} \frac{d z_1 z_2 - z_1 - z_2}{d - z_1 - z_2} f(z) |dz| = \int_{\bT^2}  \frac{(z_1 z_2)}{\overline{d - z_1 - z_2}} \frac{\overline{d  - z_2 - z_1}}{d - z_1 - z_2} f(z) |dz| = 0,
$$
where the last equality holds since $(z_1 z_2 f(z))/(d - z_1 - z_2)$ vanishes at the origin. 

By applying Theorem $3.8$ in \cite{Clark2} we can describe the Clark measures corresponding to functions of the form \eqref{favRIFs} explicitly. Namely, for $|d| \geq 2$ the Clark measure corresponding to the function $b_1(z) := (d z_1 z_2 - z_1 - z_2)/(d - z_1 - z_2)$ and the parameter $\alpha = 1$, which we denote by $\sigma_d$, is given by
$$
d \sigma_d(\zeta) = \frac{1}{\left| \frac{\partial b_1}{ \partial z_2} (\zeta, \overline{\zeta}) \right|} d m(\zeta).
$$
Direct calculation now shows that $d \sigma_d(\zeta) = |d - 2 \text{Re}(\zeta)| d m(\zeta)$.

It may be of interest to note that \emph{all} of these measures can be described as linear combinations of the two measures corresponding to the “endpoint” functions $(2 z_1 z_2 - z_1 - z_2)/(2- z_1 - z_2)$ and $z_1 z_2$. 

\end{example}

The calculations showing that the orthogonality requirement automatically holds whenever $b_1$ from \eqref{favRIFs} belongs to $H^2$ seem to be an example of a more general phenomenon that appears with RIFs. More concretely, if $c \tilde{p_1} - \alpha p_1 =  \tilde{p_2} - \alpha p_2$ (which in particular means that the corresponding Clark measures are supported on the same curve), then $(\alpha - \phi_2)/(\alpha - \phi_1) = c p_1/p_2$, and as a consequence of the fact that the reflection $\tilde{p}$ satisfies $\tilde{p} = z^n \overline{p}$ on $\bT^n$, where $n \in \bN^n$ denotes the polydegree of $p$, we have that $(\alpha - \phi_2)(\alpha - \phi_1)^{-1} k_{\phi_1}(\cdot, 0)$ automatically lies in the model space $H^2 \ominus \phi_2 H^2$ whenever $p_1 / p_2$ lies in $H^2$. We clarify this in the following corollary.

\begin{cor}
Let $\phi_1 = \tilde{p}_1 / p_1$ and $\phi_2 = \tilde{p}_2 / p_2$  be two non-constant rational inner functions on $\bD^n$, let $n_j \in \bN^n$ denote the polydegree of $p_j$ for $j=1,2$ and let $\sigma^1 := \sigma_{\alpha}[\phi_1]$ and $\sigma^2 := \sigma_{\alpha}[\phi_2]$. Assume that $n_2 \geq n_1$, and that 
$$
\frac{\alpha - \phi_2}{\alpha - \phi_1} = c \frac{p_1}{p_2}
$$
for some constant $c \neq 0$.

Then $p_1/p_2 \in H^2$, if and only if  $\sigma^1 \ll \sigma^2$ and $d \sigma^1 / d \sigma^2 \in L^2(d \sigma^2)$. 
\end{cor}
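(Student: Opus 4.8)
The plan is to reduce the statement to Theorem~\ref{MainTheorem} together with an explicit computation. Since $\phi_1 = \tilde p_1/p_1$ is a rational inner function of the form $\tilde p/p$, Theorem~\ref{MainTheorem} applies (Theorem~\ref{MainResultRIFsD^2}), so it suffices to prove that $F_w := \frac{\alpha-\phi_2}{\alpha-\phi_1}k_{\phi_1}(\cdot,w)$ lies in $\mathcal{H}(\phi_2) = H^2(\bD^n)\ominus\phi_2 H^2(\bD^n)$ for every $w\in\bD^n$ if and only if $p_1/p_2 \in H^2$. First I would compute, using $k_{\phi_1}(z,w) = (1-\phi_1(z)\overline{\phi_1(w)})C(z,w) = \frac{p_1(z)-\overline{\phi_1(w)}\tilde p_1(z)}{p_1(z)}C(z,w)$ and the hypothesis $\frac{\alpha-\phi_2}{\alpha-\phi_1} = c\,p_1/p_2$, that
\[
F_w = c\,\frac{q_w\,C(\cdot,w)}{p_2},\qquad q_w := p_1 - \overline{\phi_1(w)}\,\tilde p_1,
\]
where $q_w$ is a polynomial of polydegree $\le n_1$. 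Two elementary identities will be used throughout: on $\bT^n$ one has $\tilde p_j(z) = z^{n_j}\overline{p_j(z)}$, hence $|\tilde p_j| = |p_j|$ and $\overline{\tilde p_j} = \overline{z}^{n_j}p_j$ there; and for fixed $w\in\bD^n$, $|C(\cdot,w)|$ is bounded above and below by positive constants on $\bT^n$.

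Assume first that $p_1/p_2 \in H^2$. Then $F_w \in H^2$: on $\bT^n$ one has $|q_w/p_2| \le (1+|\phi_1(w)|)\,|p_1/p_2|$ since $|\tilde p_1| = |p_1|$, so $q_w/p_2 \in L^2(\bT^n)$; being holomorphic on $\bD^n$ it lies in $H^2$, and multiplying by the bounded holomorphic function $C(\cdot,w)$ keeps us in $H^2$. The orthogonality $F_w \perp \phi_2 H^2$ is the crux. Here I would use that the zero-free polynomial $p_2$ is cyclic in $H^2(\bD^n)$ (standard: $M_{p_2}^*$ is injective, as $M_{p_2}^* k_\lambda = \overline{p_2(\lambda)}k_\lambda$ and the $k_\lambda$ span a dense subspace); hence $\phi_2 H^2 = \overline{\phi_2 p_2 H^2} = \overline{\tilde p_2 H^2}$, and it is enough to show $\langle F_w,\,\tilde p_2 h\rangle_{L^2(\bT^n)} = 0$ for every polynomial $h$. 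Using $\overline{\tilde p_2} = \overline{z}^{n_2}p_2$ on $\bT^n$, this pairing collapses to $c\,\langle q_w C(\cdot,w),\, z^{n_2}h\rangle_{L^2(\bT^n)}$. Finally, expanding $q_wC(\cdot,w)(z) = \sum_k\big(\sum_{0\le l\le\min(n_1,k)}(q_w)_l\,\overline{w}^{k-l}\big)z^k$, one checks that for $k\ge n_1$ the $z^k$-coefficient equals $\overline{w}^{k-n_1}\big(\overline{\tilde p_1(w)}-\overline{\phi_1(w)}\,\overline{p_1(w)}\big) = \overline{w}^{k-n_1}\,\overline{\tilde p_1(w)-\phi_1(w)p_1(w)} = 0$ because $\phi_1 = \tilde p_1/p_1$. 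Since $z^{n_2}h$ only involves monomials $z^k$ with $k\ge n_2\ge n_1$ --- this is exactly where $n_2\ge n_1$ is used --- the pairing vanishes, so $F_w\in\mathcal{H}(\phi_2)$, and Theorem~\ref{MainTheorem}, $(ii)\Rightarrow(i)$, gives $\sigma^1\ll\sigma^2$ with $d\sigma^1/d\sigma^2\in L^2(\sigma^2)$.

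Conversely, if $\sigma^1\ll\sigma^2$ and $d\sigma^1/d\sigma^2\in L^2(\sigma^2)$, then Lemma~\ref{FirstImplicationGeneral} gives $F_w\in\mathcal{H}(\phi_2)\subseteq H^2$ for all $w$, so $q_w C(\cdot,w)/p_2\in L^2(\bT^n)$; dividing by $C(\cdot,w)$, which is bounded below on $\bT^n$, yields $q_w/p_2\in L^2(\bT^n)$ for every $w$. As $\phi_1$ is non-constant I may choose $w,w'$ with $\phi_1(w)\ne\phi_1(w')$; subtracting gives $(\overline{\phi_1(w')}-\overline{\phi_1(w)})\,\tilde p_1/p_2\in L^2(\bT^n)$, hence $\tilde p_1/p_2\in L^2(\bT^n)$, and therefore $p_1/p_2 = q_w/p_2 + \overline{\phi_1(w)}\,\tilde p_1/p_2\in L^2(\bT^n)$; being holomorphic on $\bD^n$, $p_1/p_2\in H^2$.

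I expect the main obstacle to be the orthogonality step: it relies on the cyclicity of the zero-free polynomial $p_2$ in $H^2(\bD^n)$ (so that $\tilde p_2 H^2$ is a legitimate test space) and on the vanishing of the Taylor coefficients of $q_wC(\cdot,w)$ at all multi-indices $\ge n_1$, which is precisely where the structural assumptions $\phi_j = \tilde p_j/p_j$ and $n_2\ge n_1$ genuinely enter. The $H^2$-membership of $F_w$ and the converse direction are routine once the reflection identities $\tilde p_j = z^{n_j}\overline{p_j}$ on $\bT^n$ are in hand.
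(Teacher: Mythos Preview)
Your overall strategy coincides with the paper's: use the reflection identities $\tilde p_j = z^{n_j}\overline{p_j}$ on $\bT^n$ to verify that $F_w\in\mathcal H(\phi_2)$, then invoke Theorem~\ref{MainTheorem}. The Taylor-coefficient computation showing that the $z^k$-coefficient of $q_wC(\cdot,w)$ vanishes for every $k\ge n_1$ is correct and neat.

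There is, however, a genuine gap in the orthogonality step. Your argument for the cyclicity of $p_2$ in $H^2(\bD^n)$ is not valid: observing that $M_{p_2}^*k_\lambda=\overline{p_2(\lambda)}\,k_\lambda\ne 0$ for each $\lambda\in\bD^n$ only says that $M_{p_2}^*$ is nonzero on every individual reproducing kernel; it does \emph{not} force $M_{p_2}^*$ to be injective on all of $H^2$. (A bounded operator can annihilate a vector outside a dense spanning set while being nonzero on every element of that set.) Since $p_2$ is only assumed zero-free in the open polydisc and may vanish on $\bT^n$, cyclicity is a nontrivial fact here and needs an independent argument; without it, testing against $\tilde p_2\cdot(\text{polynomials})$ is not enough to conclude $F_w\perp\phi_2 H^2$.

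The paper avoids this issue by specializing to $w=0$ (so $C(\cdot,0)\equiv 1$) and pairing $F_0$ directly with $\phi_2 f$ for arbitrary $f\in H^2$. The same reflection identities then collapse $\langle \phi_2 f,\,F_0\rangle$ to the integral of $(\phi_1-\phi_1(0))\,\dfrac{p_1}{p_2}\,z^{n_2-n_1}f$ over $\bT^n$, which vanishes because the integrand lies in $H^1$ and has no constant term; no cyclicity is needed. For the converse the paper also stays at $w=0$, using that $\phi_1$ is inner so $|1-\overline{\phi_1(0)}\phi_1|\ge 1-|\phi_1(0)|>0$ a.e.\ on $\bT^n$, which lets one divide and recover $p_1/p_2\in H^2$; your two-point subtraction is an equally valid alternative.
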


Note that if $c=0$ then $\phi_2$ is constant.

\begin{proof}
 If $p_1 / p_2 \in H^2$, then for all $f \in H^2$
\begin{multline*}
\int_{\bT^n} \frac{\overline{\alpha - \phi_2(z)}}{\overline{\alpha - \phi_1(z)}} (1- \phi_1(0) \overline{\phi_1(z)})) \phi_2(z) f(z) |dz| \\
= \int_{\bT^n} (1- \phi_1(0) \overline{\phi_1(z)}) \frac{\overline{p_1(z)}}{\overline{p_2(z)}} \frac{\tilde{p}_2(z)}{p_2(z)} f(z) |dz| \\
= \int_{\bT^n} (1- \phi_1(0) \overline{\phi_1(z)})) \frac{\overline{p_1}(z)}{\overline{p_2}(z)} \frac{z^{n_2} \overline{p}_2(z)}{p_2(z)} f(z) |dz| \\
= \int_{\bT^n} (1- \phi_1(0) \overline{\phi_1(z)})) z^{n_2 - n_1} \frac{\tilde{p}_1(z)}{p_2(z)} f(z) |dz|
\\
= \int_{\bT^n} (1- \phi_1(0) \overline{\phi_1(z)})) \frac{p_1}{p_2} z^{n_2 - n_1} \phi_1(z) f(z) |dz| \\
= \int_{\bT^n} (\phi_1(z) - \phi_1(0))) \frac{p_1(z)}{p_2(z)} z^{n_2 - n_1} f(z) |dz| = 0,
\end{multline*}
where the last equality holds since the integrand lies in $H^2$ and vanishes at the origin. Applying Theorem \ref{MainTheorem} for $w=0$ finishes the proof of the first implication.

If instead $\sigma^1 \ll \sigma^2$ and $d \sigma^1 / d \sigma^2 \in L^2(d \sigma^2)$, then applying $\ref{MainTheorem}$ for $w=0$ shows that $(1- \phi_1(z) \overline{\phi_1(0)}) p_1 / p_2$ lies in $H^2 \ominus \phi_2 H^2$, which in particular means that it lies in $H^2$. Since $\phi_1$ is inner, it follows that $p_1/p_2 \in H^2$, which finishes the proof. \qedhere

\end{proof}

\begin{example} \label{Ex: containment L^2 ideals}
Let
$$
\phi_1(z) = \frac{\tilde{p}_1}{p_1} :=  \frac{4 z_1^2 z_2 - z_1^2 - 3 z_1 z_2 - z_1 + z_2}{4 - z_2 - 3z_1 - z_1 z_2 + z_1^2}
$$
and
$$
\phi_2(z) = \frac{\tilde{p}_2}{p_2} := \frac{2 z_1^2 z_2 - z_1 - 1}{2 - z_1 z_2 - z_1^2 z_2},
$$
and let $\sigma^1 := \sigma_{-1}[\phi_1]$ and $\sigma^2 := \sigma_{-1}[\phi_2]$.

It is noted in Remark $5.3.$ of \cite{Clark1} that $\sigma^1$ and $\sigma^2$ are supported on the same union of curves -- namely the union of the anti-diagonal $\{(e^{iv}, e^{-iv}) \in \bT^2: v \in [0, 2 \pi) \}$ and the vertical line $\{(1, e^{i v}): v \in [0, 2 \pi) \}$ -- and the corresponding weights with respect to Lebesgue measure on that curve are calculated.

Direct calculation shows that
$$
\frac{1 + \phi_2}{1 + \phi_1} = \frac{1}{4} \frac{4 - z_2 - 3z_1 - z_1 z_2 + z_1^2}{2 - z_1 z_2 - z_1^2 z_2} = \frac{1}{4} \frac{p_1}{p_2}.
$$
Since  both $p_1$ and $p_2$ have bidegree $(2,1)$, the previous corollary shows that $\sigma^1 \ll \sigma^2$ and $d \sigma^1 / d \sigma^2 \in L^2(d \sigma^2)$ if and only if $p_1 / p_2 \in H^2(\bD^2)$.  

Thus, we are asking whether $p_1$ is contained in the ideal -- which we will denote by $\mathcal{I}_{L^2}(p_2)$ -- which consists of all polynomials $q$ such that $q / p_2 \in L^2(\bT^2)$. Ideals of this sort were extensively studied in \cite{Knese}.

The ideal $\mathcal{I}_{L^2}(p_2)$ contains the ideal $\langle p_2 , \tilde{p}_2\rangle$, since all polynomials $q \in \langle p_2 , \tilde{p}_2\rangle$ have the property that $q/p_2$ is bounded on $\bT^2$.

A Gröbner basis for $\langle p_2, \tilde{p_2} \rangle$ is given by 
\begin{align*}
v_1 &:= (z_1 - 1)^2 = -2 x p_2 - (1+x) \tilde{p}_2 ,\\
v_2 &:= (3 z_1 + 2 z_2 - 5) = -4(1+xy)p_2 - (3 + 2y(1+x))\tilde{p}_2,
\end{align*}
but
\begin{multline*}
p_1 := 4 - z_2 - 3z_1 - z_1 z_2 + z_1^2 = \frac{5}{2} (z_1 - 1)^2 - \frac{z_1 + 1}{2} (3 z_1 + 2 z_2 - 5) + (z_1 - 1) \\
= \frac{5}{2} v_1 - \frac{z_1 + 1}{2} v_2 + (z_1 - 1), 
\end{multline*}
so $p_1$ is not in the ideal generated by $p_2$ and $\tilde{p}_2$ as it has remainder $z_1 - 1$ relative to the Gröbner basis $v_1, v_2$ (see \cite{Gröbner} for more on Gröbner bases).

However, from the above calculations it follows that $p_1 \in \mathcal{I}_{L^2}(p_2)$ if (and only if) $(1-z_1) \in \mathcal{I}_{L^2}(p_2)$. By using Cauchy's integral formula we see that
\begin{multline*}
\int_{\bT^2} \frac{|z_1 - 1|^2}{|2 - z_1 z_2 - z_1^2 z_2|^2} |dz| = \int_{\bT}|z_1 - 1|^2 \int_{\bT}\frac{1}{|2 - z_1 z_2 - z_1^2 z_2|^2} |dz_2| |d z_1| \\
= \frac{1}{2}\int_{\bT}|z_1 - 1|^2 \int_{\bT}\frac{1}{2 - z_1 z_2 - z_1^2 z_2} \frac{1}{1 - \overline{z_2\frac{(z_1 + z_1^2)}{2}}} |dz_2| |d z_1| \\
= \frac{1}{2}\int_{\bT}|z_1 - 1|^2 \int_{\bT}\frac{1}{(2 - z_1 z_2 - z_1^2 z_2)} \frac{1}{\left( 1 - \overline{z_2}\frac{\overline{z_1 + z_1^2}}{2} \right)} |dz_2| |d z_1| \\
= \frac{1}{2}\int_{\bT}|z_1 - 1|^2 \frac{1}{2 - \frac{|z_1 + z_1^2|^2}{2}}|d z_1| = \int_{\bT} \frac{|z_1 - 1|^2}{4 - |1 + z_1|^2}|d z_1| \\
= \int_{\bT} \frac{|z_1 - 1|^2}{(2 - |1 + z_1|)(2+|1+ z_1|)}|d z_1| < \infty,
\end{multline*}
where the last inequality follows since $|z_1-1|^2/(2-|1+z_1|) \leq 4$ for $z_1 \in \bT$. 
It follows that $p_1 \in \mathcal{I}_{L^2}(p_2)$, and thus $\sigma^1 \ll \sigma^2$ and $d \sigma^1 / d \sigma^2 \in L^2(d \sigma^2)$.

Conversely, $\sigma^2 \ll \sigma^1$ and $d \sigma^2 / d \sigma^1 \in L^2(d \sigma^1)$ if and only if $p_2/p_1 \in H^2$, i.e. if and only if $p_2 \in \mathcal{I}_{L^2}(p_1)$.

In Example $15.1$ of \cite{Knese}, Knese shows that the ideal $\mathcal{I}_{L^2}(p_1)$ is generated by the polynomials $v_1 = (1-z_2)^2, v_2 = (1-z_1)(1-z_2),$ and $v_3 = 1 - z_1 z_2$. A Gröbner basis for this ideal is given by $g_1 = (1-z_2)^2, g_2 = z_1+z_2-2$. That it is indeed a basis is verified by noting that $v_1 = g_1$, $v_3 = g_1 - z_2 g_2$, and $v_2 = -(g_2 + v_3) = - (g_2(1- z_2) + g_1)$.

It can now be checked directly that $p_2$ is \emph{not} contained in the ideal generated by these polynomials (its remainder with respect to $g_1, g_2$ is $r(z) := z_2 - 1 \neq 0$). 

Thus $p_2 / p_1 \not\in H^2$, and so it is not the case that $\sigma^2 \ll \sigma^1$ and $d \sigma^2/ \sigma^1 \in L^2(\sigma^1)$.

\end{example}

We end this note with an example where Theorem \ref{MainTheorem} \emph{is} applicable, i.e. where the non-degenerate part of $\sigma^1$ is carried by a null set of $A(\bD^2)^\perp$, but where one can not use Theorem \ref{6.1.2. Rudin} to deduce this. 

\begin{example} \label{RudinNotEnough}
For $\alpha \in \bT$, consider the Borel measures on $\bT^2$ given by $\sigma_\alpha := \sigma_\alpha[z_1 z_2]$. As we saw in Example \ref{Ex: anti-diagonal}, $\sigma_\alpha$ is normalized Lebesgue measure on the straight line $\{(e^{iv}, \alpha e^{-iv}) \in \bT^2: v \in [0, 2 \pi) \}$, and for every $\alpha$, this straight line will be a null set of $A(\bD^n)^\perp$ by Theorem \ref{6.1.2. Rudin}. Now let $\{ \alpha_j \}_{j=1}^\infty$ with $\alpha_j \in \bT$ be a dense sequence in $\bT$, and consider the measure
$$
\sigma := \sum_{j=1}^\infty \frac{1}{2^j} \sigma_{\alpha_j}. 
$$
This measure is a singular Borel probability measure on $\bT^2$, so as we noted in the introduction it will be the Clark-Aleksandrov measure corresponding to $\alpha = 1$ and some inner function $\phi: \bD^2 \ra \bD$ (again, see Lemma $2.2.$ of \cite{Clark2} for details). The set
$$
\bigcup_{j=1}^\infty \{(e^{iv}, \alpha_j e^{-iv}) \in \bT^2: v \in [0, 2 \pi) \} \subset \bT^2
$$
is a carrier of the measure $\sigma$, and furthermore it is a null set for $A(\bD^2)^\perp$ since countable unions of sets of measure zero have measure zero. It follows that Theorem \ref{MainTheorem} is applicable for $b_1 = \phi$ and $\sigma_1[b_1] = \sigma$. 

However, there is no closed null set for $A(\bD^2)^\perp$ that carries $\sigma$, since the support of $\sigma$ is all of $\bT^2$.

Thus it would be desirable to have a way of determining whether or not a given set is a null set for $A(\bD^2)^\perp$, which does not require that the set in question is closed.

\end{example}

In general, it would be interesting to know for which inner functions $b_1$ Theorem \ref{MainTheorem} is applicable. Will all Clark-Aleksandrov measures corresponding to inner functions (i.e. all singular pluriharmonic measures of mass $1$) have non-degenerate part carried by some null-set for $A(\bD^2)^\perp$, or is this only the case for certain inner functions?

\section*{Acknowledgements}
The author thanks Alan Sola for several valuable discussions, and Louis Hainaut for help and valuable discussions regarding the ideals in Example \ref{Ex: containment L^2 ideals}.

\end{document}